\documentclass[10pt,twoside]{article}
\usepackage{amsmath,amssymb,amsthm,color}
\usepackage[T1]{fontenc}
\usepackage[latin1]{inputenc} 
\usepackage{a4wide}
\usepackage{amsmath,amsthm,amsxtra}
\usepackage{xparse}
\usepackage{tabularx}
\usepackage{multicol}
\usepackage{color}
\usepackage{yfonts}
\usepackage{float}
\usepackage{graphicx}
\usepackage{subcaption}
\usepackage{pdfsync}
\usepackage{graphicx}

\usepackage[colorlinks=true]{hyperref}
\hypersetup{linkcolor=red,citecolor=blue,filecolor=dullmagenta,urlcolor=blue}

\pagestyle{myheadings}\markboth{On decay problem for Zakharov systems}{Mar\'ia
E. Mart\'inez}

\newtheorem{theorem}{Theorem}[section]
\newtheorem{corollary}[theorem]{Corollary}
\newtheorem{lemma}[theorem]{Lemma}
\newtheorem{proposition}[theorem]{Proposition}
\newtheorem{claim}[theorem]{Claim}

\theoremstyle{remark}

\newtheorem{remark}{Remark}[section]

\newcommand{\deri}{\frac{d}{dt}}
\newcommand{\intR}{\int_\mathbb{R}}
\newcommand{\intp}{\int_0^{\infty}}
\newcommand{\im}{\text{Im}}
\newcommand{\re}{\text{Re}}
\newcommand{\R}{\mathbb{R}}

 \numberwithin{equation}{section}

\DeclareMathOperator{\sech}{sech}


\begin{document}

\title{On the decay problem for the Zakharov and Klein-Gordon Zakharov systems
in one dimension }
\author{Mar\'ia E. Mart\'inez\\ Departamento de Ingener\'ia Matem\'atica DIM \\
FCFM Universidad de Chile\\
{\tt maria.martinez.m@uchile.cl}\footnote{Partially funded by Conicyt Beca
Doctorado Nacional no. 21192076, ECOS-Sud C18E06, CMM Conicyt PIA AFB170001 and
Fondecyt Regular
1191412. Part of this work was done while the author was visiting
Universit\'e Paris-Saclay (Sud) and
 Evolutionary Partial Differential Equation group (Universidad de Granada),
 whose support is greatly acknowledged.}}
\date{\today}
\maketitle

\begin{abstract}
  We are interested in the long time asymptotic behavoir of solutions to the
  scalar Zakharov system
  \begin{equation*}
  \begin{array}{ll}
  i u_{t} + \Delta u = nu,\\
  n_{tt} - \Delta n= \Delta |u|^2
  \end{array}
  \end{equation*}
  and the Klein-Gordon Zakharov system
  \begin{equation*}
  \begin{array}{ll}
  u_{tt} - \Delta u + u = - nu,\\
  n_{tt} - \Delta n= \Delta |u|^2
  \end{array}
  \end{equation*}
  in one dimension of space.
  For these two systems, we give two results proving decay of solutions for
  initial data in the energy
  space. The first result deals with decay over compact intervals asuming
  smallness and
  parity conditions ($u$ odd). The second result proves decay in far field
  regions along curves for solutions whose growth can be dominated by an
  increasing $C^1$ function. No smallness condition is needed to prove this last
   result for the Zakharov system.
  We argue relying on the use of suitable virial
  identities appropiate for the equations and follow the technics of
  \cite{KMM1, M} and \cite{MPS}.
 \end{abstract}

\section{Introduction}

In this work, we are concerned with the one dimensional Zakharov system
	\begin{equation}\label{Zakharov}
	\begin{array}{ll}
 	i u_{t} + \Delta u = nu,& \quad (t,x) \in \R\times \R,\\
	\alpha^{-2} n_{tt} - \Delta n= \Delta |u|^2,& \quad (t,x) \in \R\times \R,
  \end{array}
	\end{equation}
with initial data
	\[
	u(t=0, x)=u_0(x), \quad n(t=0, x)=n_0(x), \quad n_t(t=0, x)=n_1(x).
	\]
where $u(t,x):\R\times \R\to \mathbb{C}$, $n(t,x):\R\times \R\to \R$ and
$\alpha>0$.
\par We are also interested the Klein-Gordon Zakharov system in one dimension
  \begin{equation}\label{KGZ}
  \begin{array}{ll}
  c^{-2}u_{tt} - \Delta u + c^2u = - nu,& \quad (t,x) \in \R\times \R,\\
  \alpha^{-2} n_{tt} - \Delta n= \Delta |u|^2& \quad (t,x) \in \R\times \R,
  \end{array}
  \end{equation}
  with initial data
  \[
  \begin{aligned}
  u(t=0, x)=u_0(x), \quad  u_t(t=0, x)=u_1(x) \\
  n(t=0, x)=n_0(x), \quad  n_t(t=0, x)=n_1(x).
  \end{aligned}
  \]
where $u(t,x):\R\times \R\to \mathbb{R}$, $n(t,x):\R\times \R\to \R$, $\alpha>0,
 \ c>0$.

\medskip

\par The Zakharov systems are simplified models for the description of
long-wavelenght small-amplitude Langmuir oscillations in a ionized plasma
\cite{Z}. Langmuir waves are rapid oscillations of the electron density;
electrons and ions oscillate out of phase.  Zakharov equations model the
nonlinear interactions between the mean mode of the ionic fluctuations of
density in the plasma $n$ and the changing amplitude of electric field $u$,
which varies slowly compared to the unperturbed plasma frequency. The constant
$\alpha$ is the ion sound speed and $c$ is the plasma frequency.
\par In the \emph{subsonic limit} ($\alpha \to \infty$), in which density
perturbations are changing
slowly, the term $n_{tt}$ of the wave equation in \eqref{Zakharov} is
negligible. This would imply that the Langmuir waves follow the cubic NLS
equation
  \[
  i u_t +\Delta u +|u|^2u=0.
  \]
  If one considers $\tilde u=e^{ic^2t}$ in \eqref{KGZ}, then it follows that
  \begin{equation*}
 \begin{aligned}
 &c^{-2}\tilde u_{tt} - 2i\tilde u_t - \Delta \tilde u = - n\tilde u,\\
 &\alpha^{-2} n_{tt} - \Delta n= \Delta |\tilde u|^2.
  \end{aligned}
 \end{equation*}
Thus, formally, in the \emph{high-frequency limit} (that is, taking
$c \to \infty$) of the Klein-Gordon-Zakharov \eqref{KGZ}, the Zakharov system
\eqref{Zakharov} is recovered.
\par These high-frequency and subsonic limits were extensively studied in
\cite{MN2}-\cite{MN1}. See, also, \cite{SS2, R, Ber} for more details on the
physical derivation.

\medskip

The Zakharov system \eqref{Zakharov}
preserves the mass $\|u(t)\|_{L^2(\R)}=\|u(0)\|_{L^2(\R)}$ and the energy
	\begin{equation*}
	\begin{aligned}
	H_{S}(t):= \intR |\nabla u(t,x)|^2
  + \frac{1}{2}\Big(|n(t,x)|^2 +\frac{1}{\alpha^2}|D^{-1}n_t(t,x)|^2\Big)
  + n(t,x)|u(t,x)|^2dx,
	\end{aligned}
	\end{equation*}
where $D=\sqrt{-\Delta}$. The Klein-Gordon-Zakharov system \eqref{KGZ} preserves
 the following energy, as well:
\begin{equation*}
\begin{aligned}
H_{KG}(t)=& \intR c^2|u(t,x)|^2 + |\nabla u(t,x)|^2 + \frac{1}{c^2}|u_t(t,x)|^2
+ \frac{1}{2}|n(t,x)|^2 \\
&+\frac{1}{2}||\alpha D|^{-1}n_t(t,x)|^2 + n(t,x)|u(t,x)|^2dx=H_{KG}(0).
\end{aligned}
\end{equation*}

\medskip

\par The system \eqref{Zakharov} in one dimension is globally well-posed for
initial data in $H^1(\R)\times L^2(\R)\times \hat H^{-1}(\R)$, where
	\[
	 w \in \hat H^s \text{ if there exists } v:\R^d\to \R^d \text{ such that }
   w=\nabla\cdot v \text{ and } \|w\|_{\hat H^s}=\|v\|_{H^{s+1}}.
	 \]
The first aproach in
this regard was presented by Sulem and Sulem in \cite{SS}, where they stated
local well-posedness of \eqref{Zakharov} for dimensions $d=1,2,3$ and initial
data
  \[
  (u_0, n_0, n_1)\in H^m(\R^d) \times H^{m-1}(\R^d)\times \left(H^{m-2}\cap
  \hat H^{-1}\right)(\R^d), \quad m\ge 3.\]
Using the Brezis-Gallou\"et inequality
\begin{equation}\label{BrezisGallouet}
\|u\|_{L^{\infty}}\lesssim 1+\|u\|_{H^1}(\ln(1+\|\Delta u\|_{L^2})),
\end{equation}
valid if $u \in H^2(\R^2)$,
Added and Added in \cite{AA} improved \cite{SS} to global well-posedness for
small initial data in the 2-dimensional case. The local well posedness result
($d=1,2,3$) was refined by Ozawa and Tsutsumi \cite{OT}, for data
$(u_0, n_0, n_1) \in H^2\times H^1\times L^2$,
and by Colliander \cite {C1} for
$(u_0, n_0, n_1) \in H^1\times L^2\times\hat H^{-1}$.
In fact, in \cite{C1}, using apriori estimates on the $H^1$-norm of $u$,
Colliander shows global well-posedness for small data in the one-dimensional
case. Finally, on \cite{P}, Pecher proves that the Zakharov system is globally
well posed for rough data
$(u_0, n_0, n_1) \in H^s\times L^2\times \hat H^{l-1}$, $1>s>9/10$ for
dimension $d=1$ without any smallness condition. More results on local and
global well-posedness for other dimensions and more general nonlinearities are
stated in \cite{BC, C2, GTV, MN6}, and on the torus in \cite{B}.

\medskip

\par Regarding well-posedness for system \eqref{KGZ}, the first result was
presented in \cite{OTT1}, where the authors followed a method based on the
theory of normal forms to prove that \eqref{KGZ} in dimensions $d=1,2,3$ with
$c= \alpha=1$, admits a unique global solutions for small initial data with
rather restrictive regularity conditions.
Also, they give a completness result for the $3$-dimensional case, as they
show the existence of global solutions that tend to behave assymptotically
(when $t\to \infty$) as the free solutions.
\par Using Sobolev invariant spaces, Tsutaya \cite{Tsutaya}, improved the
regularity conditions of the global existence result in \cite{OTT1} for
dimension $d=3$. The low-frequency case ($0<\alpha<1$) in three dimensions was
adressed in \cite{OTT2}, where the authors rely on the different propagation
speed (normalized $c=1$ in the Klein gordon equation, while assumed $0<\alpha<1$
 in the wave equation) to prove local and then global well-posedness for small
 initial data in the energy space:
  \[
  (u_0, u_1, n_0, n_1) \in H^1\times L^2\times L^2\times \hat H^{-1}.
  \]
Following the idea stated in \cite{OTT2}, Otha and Todorova \cite{OthaT}
extended the well-posedness result for all $\alpha>0$ and $d<3$. The
high-frequency, subsonic case in dimension $d=3$ was later treated by Masmoudi
and Nakanishi in \cite{MN5}-\cite{MN6}, where they presented local
well-posedness in the energy space under the assumption $\alpha<c$.

\medskip

In this paper, we are interested in the decay of solutions to systems
\eqref{Zakharov} and \eqref{KGZ}.

\medskip

\par It is known that for dimension $d=2,3$, there exist solutions to
\eqref{Zakharov}
 that decay to zero in the energy space $H^1\times L^2 \times \hat H^{-1}$.
 Indeed, Ozawa and Tsutsumi \cite{OT}, Ginibre and Velo \cite{GV}, and Shimomura
  \cite{S}, proved existence and uniqueness of asymptotically free solutions of
  \eqref{Zakharov} by solving the system with final data given at $t\to \infty$,
   instead of the initial value problem; that is, for $u_+$ and $n_+$ free
   solutions of the Schr\"odinger and wave equations respectively,
	\[
	\|u(t)-u_+(t)\|_{H^1} + \|\nabla n(t)-\nabla n_+\|_{L^2}  + \|\partial_t n(t)
  -\partial_t n_+(t)\|_{L^2}\to 0, \text{ as }t\to \infty.
	\]
Completness results were also obtained for the 3-dimensional
Klein-Gordon-Zakharov \eqref{KGZ}. In fact, Ozawa, Tsutaya and Tsutsumi
\cite{OTT1} proved the existence of global solutions that behave asymptotically
as free solutions in space
  \[
    \sum_{j=0,1} \|\partial_t^j(u(t)-u_+(t))\|_{H^{52-j}}
    +\sum_{j=0,1}\|\partial_t^j(n(t)-n_+(t))\|_{H^{51-j}}\to 0 \quad \text{as}
    \quad t\to \infty.
  \]
\par
In \cite{GN}, Guo and Nakanishi prove that radially symmetric solutions for the
Zakharov system in $d=3$ with small energy do scatter. By using generalized
Strichartz estimates for the Schr\"odinger equation, Guo, Lee, Nakanishi and
Wang in \cite{GLNW} were able to improve \cite{GN} by showing scattering of
small solutions without the radial assumption.

Following the idea in \cite{GN}, Guo, Nakanishi and Wang \cite{GNW1} proved
scattering in the energy space for radially symetric solutions with small energy
 for the system \eqref{KGZ} in three dimensions, as well. In \cite{GNW2}, they
 continue the study of global dynamics of radial solutions in three dimensions
 and find a dichotomy between scattering and blow-up. More specifically, relying
  on virial identities, they show that if the initial data is radially symetric
  and its energy is below the energy of the ground state then the solution to
  \eqref{KGZ} can either (for both $i=1,2$):
\begin{itemize}
  \item scatter when $J_i(u_0)\ge0$, or
  \item blow up in finite time when $J_i(u_0)<0$,
\end{itemize}
where $J_i$ are scaling derivative of the static Klein-Gordon energy:
  \begin{gather*}
    J_0(v)=\int |u|^2+|\nabla u|^2 -|u|^4dx \quad \text{and} \quad
    J_2(v)=\int |\nabla u|^2 -\frac{3}{4}|u|^4dx.
  \end{gather*}
\par The behavior of radially symmetric solutions of \eqref{Zakharov} was also
studied in \cite{M1}. Using virial identities, Merle in \cite{M1} showed blow
up at either finite or infinity time for radially symmetric solutions to
\eqref{Zakharov} that satisfy $E_s<0$ for $d= 2,3$. In \cite{M2}, Merle
improved the results in \cite{M1} by presenting lower estimates on the blow-up
of the Zakharov system in the 2-dimensional case.
\par Notice that all positive decay/scattering results above mentioned do not
deal with the case $d=1$.
\par

\medskip

\par From now on, we consider the one-dimensional case. In the following
subsections, we introduced a reduction of order for the Zakharov and
Klein-Gordon-Zakharov systems and present the main results of this work.

\subsection{Main results for Zakharov system}

In order to simplify the computations, from now on we consider system
\eqref{Zakharov} with $\alpha=1$, although the analysis still works for
$\alpha\ne 1$. With the purpose of reducing \eqref{Zakharov} into a first order
system, we
introduce the real function $v$ such that:
  \begin{equation}\label{ZSb}
  \begin{aligned}
    &i u_{t} + u_{xx} = nu, \\
    &n_{t} + v_x= 0, \\
    &v_{t} + \left( n +|u|^2\right)_x = 0,
  \end{aligned}
  \end{equation}
and
  \[
  u(t=0, x)=u_0(x), \ n(t=0, x)=n_0(x), \ v(t=0, x)=v_0.
  \]
Such suposition is possible because we study the Zakharov system
\eqref{Zakharov} in the Hamiltonian case, meaning that we assume that there
exists $v_0 \in L^{2}(\R)$ such that $-\nabla \cdot v_0 = n_t(0)$; property that
is preserved by the flow. This way, to consider
$(u, n, n_t) \in H^1(\R)\times L^2(\R)\times \dot H^{-1}(\R)$ solution of
\eqref{Zakharov} is equivalent to study
$(u, n, v) \in H^1(\R)\times L^2(\R)\times L^2(\R)$ solution to \eqref{ZSb}.

\medskip

\par
 The system \eqref{ZSb} preserves:
\begin{itemize}
	\item Mass: \begin{equation}\label{CQ0}
	\begin{aligned}
	M_s(t):= \intR | u(t,x)|^2 dx=M_s(0),
	\end{aligned}
	\end{equation}
	\item Energy:
	\begin{equation}\label{CQ1}
	\begin{aligned}
	E_{s}(t):= \intR | u_x(t,x)|^2 + \frac{1}{2}\Big(|n(t,x)|^2
  +|v(t,x)|^2\Big) + n(t,x)|u(t,x)|^2dx
	=E_s(0),
	\end{aligned}
	\end{equation}
	\item Momentum:
	\begin{equation}\label{CQ2}
	\begin{aligned}
	P_s(t):=\im \intR u(t,x)\overline{u}_x(t,x) dx
  - \intR v(t,x)n(t,x) \text{ } dx = P_s(0).
	\end{aligned}
	\end{equation}
\end{itemize}

\medskip

In the present work, we show decay for solutions of \eqref{ZSb} in one dimension
 in two different ways. On one hand, we prove decay on any compact interval for
 solutions to \eqref{ZSb} under parity assumptions ($u$ odd). On the other hand,
  we are able to show decay, without any oddness condition but with sufficient
  regularity ($\|u(t)\|_{H^2} \in L^\infty$), in regions along curves outside
  the ``light cone''.

\begin{theorem}\label{theorem1}
	Assume $E_s<\infty$. Let
  $(u, n, v) \in C\left(\R^+, H^1(\R)\times L^2(\R)\times L^2(\R)\right)$
  be a solution of \eqref{ZSb} such that $u$ is odd and satisfies, for some
  $\varepsilon>0$ small,
		{\color{black}
		\begin{equation}\label{hyp}
    \sup_{t\ge 0} \|u(t)\|_{H^1(\R)}<\varepsilon.
		\end{equation}}
	Then, for every compact interval $I\subset \R$,
		\begin{equation}\label{thesis}
		\lim_{t \to \infty} \|u(t)\|_{L^\infty(I)}+\|u(t)\|_{L^2(I)}
    +\|n(t)\|_{L^2(I)}+\|v(t)\|_{L^2(I)} = 0.
		\end{equation}
\end{theorem}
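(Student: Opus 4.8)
The plan is to prove Theorem \ref{theorem1} through a localized virial (Morawetz-type) identity adapted to \eqref{ZSb}, in the spirit of \cite{KMM1, M, MPS}, and then to promote the resulting space-time bound to a pointwise-in-time limit. First I would reduce the conclusion: it suffices to prove $\|u(t)\|_{L^2(I)}+\|n(t)\|_{L^2(I)}+\|v(t)\|_{L^2(I)}\to 0$, since the $L^\infty(I)$ decay then follows from the one–dimensional Gagliardo--Nirenberg inequality $\|u\|_{L^\infty(I)}\lesssim \|u\|_{L^2(I)}^{1/2}\|u\|_{H^1(I)}^{1/2}+\|u\|_{L^2(I)}$ together with \eqref{hyp}. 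I would also record that \eqref{hyp}, mass conservation \eqref{CQ0} and $E_s<\infty$ yield, after completing the square in the sign–indefinite term $\intR n|u|^2$ (controlled by $\varepsilon^2\|n\|_{L^2}$), uniform-in-time bounds $\sup_{t\ge 0}\big(\|u(t)\|_{H^1}+\|n(t)\|_{L^2}+\|v(t)\|_{L^2}\big)<\infty$.

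The core of the argument is the functional, built as a localized version of the conserved momentum \eqref{CQ2}. Fix a large scale $A>0$, let $\varphi(x)=\tanh(x/A)$ and $\psi:=-\varphi'=A^{-1}\sech^2(\cdot/A)>0$, and set
\[
\mathcal{I}(t):=\im\intR \varphi\, u\,\overline{u}_x\,dx-\intR \varphi\, n\,v\,dx.
\]
Differentiating in time using the three equations of \eqref{ZSb} and integrating by parts, one finds after the decisive cancellations (the two $\intR \varphi\, n_x|u|^2$ contributions cancel between the Schr\"odinger and wave pieces, and the $\intR \varphi'\,n|u|^2$ terms internal to the Schr\"odinger piece cancel as well)
\[
\deri\mathcal{I}(t)=2\intR \psi\,|u_x|^2\,dx+\tfrac12\intR \psi\,(n^2+v^2)\,dx-\tfrac12\intR \psi''\,|u|^2\,dx+\intR \psi\, n|u|^2\,dx.
\]
The first two terms are the good coercive quantities. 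I would treat the Schr\"odinger remainder by completing the square, $2\int\psi|u_x|^2-\tfrac12\int\psi''|u|^2=2\int\psi\,|u_x+\tfrac{\psi'}{4\psi}u|^2-\tfrac18\int\tfrac{(\psi')^2}{\psi}|u|^2$, so that what must be absorbed is $\tfrac18\int\frac{(\psi')^2}{\psi}|u|^2\lesssim A^{-2}\int\psi|u|^2$ together with the cubic coupling $\int\psi\, n|u|^2$.

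The main obstacle is precisely this absorption step, and it is where both hypotheses enter. Oddness forces $u(t,0)=0$, which supplies the weighted Hardy/Poincar\'e inequality $\int\psi|u|^2\lesssim A^2\int\psi|u_x|^2$ (the Muckenhoupt condition for the weight $\sech^2(\cdot/A)$ is satisfied; note this inequality genuinely fails for general $u$, the constants being the obstruction removed by oddness). Smallness \eqref{hyp} gives $\|u\|_{L^\infty}\lesssim\varepsilon$, so the coupling splits as $\int\psi\, n|u|^2\le \tfrac14\int\psi n^2+\varepsilon^2\int\psi|u|^2$, the first part absorbed into the wave term and the second, via the weighted Poincar\'e, into the kinetic one for $\varepsilon$ small relative to the localization scale $A$. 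Getting the clean coercivity
\[
\deri\mathcal{I}(t)\ge c\intR \psi\,\big(|u_x|^2+n^2+v^2\big)\,dx,\qquad c>0,
\]
with the interplay between $\varepsilon$ and $A$ correctly balanced, is the crux of the proof.

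With this in hand the rest is soft. Since $|\mathcal{I}(t)|\le\|\varphi\|_{L^\infty}\big(\|u\|_{L^2}\|u_x\|_{L^2}+\|n\|_{L^2}\|v\|_{L^2}\big)$ is uniformly bounded, integrating the coercivity gives $\int_0^\infty\int_\mathbb{R}\psi\,(|u_x|^2+n^2+v^2)\,dt<\infty$; because $\psi\ge cA^{-1}$ on any fixed $I=[-R,R]$ once $A\gtrsim R$, this yields $\liminf_{t\to\infty}\int_I(|u_x|^2+n^2+v^2)=0$, and Poincar\'e on $I$ (again using $u(t,0)=0$) upgrades $\int_I|u_x|^2$ to $\int_I|u|^2$, so $\liminf_{t\to\infty}\big(\|u\|_{L^2(I)}+\|n\|_{L^2(I)}+\|v\|_{L^2(I)}\big)=0$. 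To pass from $\liminf$ to a genuine limit I would set $F(t):=\intR \chi\,(|u|^2+n^2+v^2)\,dx$ for a smooth cutoff $\chi$ with $\chi\equiv 1$ on $I$, and check that $|F'(t)|$ is bounded uniformly in $t$ using the equations and the uniform bounds (e.g. $\tfrac{d}{dt}\int\chi|u|^2=\int\chi'\,\im(\overline{u}u_x)$, $\tfrac{d}{dt}\int\chi n^2=2\int\chi'\,nv$, $\tfrac{d}{dt}\int\chi v^2=2\int\chi'\,v(n+|u|^2)$, each controlled by the conserved and small quantities). A nonnegative, uniformly continuous function with finite integral on $[0,\infty)$ tends to $0$, so $F(t)\to 0$; combined with the Gagliardo--Nirenberg interpolation this gives \eqref{thesis}.
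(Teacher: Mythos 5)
You follow the same architecture as the paper's proof: the same localized momentum functional, the same virial identity (your display agrees with the paper's once the sign slip is fixed --- for $\varphi=\tanh(x/A)$ one has $\varphi'>0$, so $\psi:=-\varphi'$ is negative, not $A^{-1}\sech^2(x/A)$), smallness \eqref{hyp} for the cubic term, integration in time, and a soft upgrade to a genuine limit. The genuine gap is in the step you yourself call the crux, and it cannot be closed by the tools you invoke. After completing the square, the term to absorb is $\tfrac18\int\tfrac{(\psi')^2}{\psi}|u|^2\le\tfrac{1}{2A^2}\int\psi|u|^2$, while your weighted Hardy inequality costs a factor $C_HA^2$; the scale $A$ cancels identically, so there is no ``interplay between $\varepsilon$ and $A$'' available --- everything rests on the dimensionless constant $C_H$, and your absorption needs $C_H<1$. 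Muckenhoupt's criterion only yields $C_H\le 4B$ with $B\approx 1/3$ for this weight, i.e.\ a bound of order $4/3$, which is not enough. In fact the sharp constant is exactly $C_H=1$: writing $w=\sech(x/A)\,u$, one computes $\int\psi|u|^2=\int w^2$ and $\int\psi |u_x|^2=\int w_x^2+A^{-2}\int w^2-2A^{-2}\int \sech^2(x/A)w^2$, so the inequality $\int\psi|u|^2\le CA^2\int\psi|u_x|^2$ for odd $u$ is equivalent to
\begin{equation*}
(1-C)\int w^2+2C\int \sech^2(x/A)\,w^2\le CA^2\int w_x^2 ,
\end{equation*}
which fails for every $C<1$ (take $w$ odd, equal to $\pm1$ on most of $[-R,R]$ and supported there, $R\to\infty$) and holds at $C=1$ precisely because $-\partial_x^2-\tfrac{2}{A^2}\sech^2(\cdot/A)$ has no odd bound state. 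With $C_H=1$ your absorption leaves a margin of exactly zero. Worse, the coercivity you claim, $\deri\mathcal{I}\ge c\int\psi\big(|u_x|^2+n^2+v^2\big)$, is false as a quadratic-form statement (being homogeneous, it cannot be rescued by smallness of $u$): for $u=\cosh(x/A)\,w$ with $w$ as above and $n=v=0$, the virial side equals $2\int w_x^2-A^{-2}\int\sech^2(x/A)w^2=O(R^{-2})$, while $\sqrt{\psi}\,u_x=w_x+A^{-1}\tanh(x/A)\,w$ has $L^2$ norm tending to $A^{-1}\|w\|_{L^2}>0$. What the virial actually controls is the conjugated quantity $\int|(\sqrt{\psi}\,u)_x|^2$: this is the paper's route, setting $\zeta=\sqrt{\varphi'}\,u$ so that the quadratic part becomes exactly $2\int\zeta_x^2-\lambda^{-2}\int\sech^2(x/\lambda)\zeta^2$, and then invoking the sharp spectral Lemmas \ref{Coercivity}--\ref{CoercivitySecond} imported from \cite{M} (the unique bound state of the $\sech^2$ potential is even), which leave the margin $\tfrac32\int\zeta_x^2$. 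That spectral input is precisely what the Muckenhoupt criterion cannot supply.

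Two further remarks. First, your downstream steps are repairable once restated in the conjugated variable: $(\sqrt{\psi}\,u)(0)=0$ by oddness, so Poincar\'e on a compact interval applied to $\sqrt{\psi}\,u$ still recovers $\int_0^\infty\int_I\big(|u|^2+|u_x|^2\big)\,dt<\infty$, and your Barbalat-type ending then goes through; also your Gagliardo--Nirenberg reduction of the $L^\infty$ statement is correct, and is in fact simpler than the paper's use of Claim \ref{claim1}. Second, the separate identities you propose in the conclusion are wrong: $\deri\int\chi n^2=-2\int\chi n v_x$ produces $2\int\chi' nv+2\int\chi n_x v$, and the term $\int\chi n_x v$ (similarly $\int\chi v_x(n+|u|^2)$ in the $v$-identity) does not even make sense for $L^2$ data. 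Only the summed quantity is usable, because there the offending terms combine into $\int\chi\,(nv)_x$ and can be integrated by parts; this is exactly the paper's computation of $\deri\int\phi\big(|u|^2+|v|^2+|n|^2\big)$, and you should perform the computation jointly rather than term by term.
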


\begin{remark}
	Asking for $u$ to be odd implies necessarily for $n$ to be even. This property
   is preserved by the flow.
\end{remark}
\begin{remark}
  The fact that $u$ is odd allows to rule out solitary waves.
  The first result regarding solitary waves was stated by Wu in \cite{W}, where
  he proves existence and orbital stability of solutions
    \begin{equation}\label{ExactTravelingWaves1}
    u(t,x)=e^{-i\omega t}e^{iq(x-ct)}u_{\omega, c}(x-ct),
    \end{equation}
  and
    \begin{equation}\label{ExactTravelingWaves2}
    n(t,x)=n_{\omega, c} (x-ct),
    \end{equation}
  for
    \[
    u_{\omega, c}(x)=\sqrt{\frac{(4\omega+c^2)(1-c^2)}{2}}
    \sech\left( \frac{\sqrt{4\omega+c^2}}{2} x\right),
    \]
    \[
    n_{\omega, c}(x)=\left(2\omega+\frac{c^2}{2}\right)
    \sech^2\left(\frac{\sqrt{4\omega+c^2}}{2}x\right), \quad q=\frac{c}{2},
    \]
  satisfying
    \[
    4\omega +c^2\ge 0 \ \text{ and } \ 1-c^2>0.
    \]
  \par Angulo and Banquet \cite{AB} studied existence of periodic travelling
  wave forms such as \eqref{ExactTravelingWaves1}-\eqref{ExactTravelingWaves2},
  in this case for $u_{\omega,c}$ and $n_{\omega, c}$ being periodic functions,
  and prove their orbital stability as well. See also \cite{FGG, O, H1, H2, ZSX}
   for other results on solitary waves for generalized Zakharov systems.
\end{remark}

\begin{remark}
  We do not prove decay in the energy space
  $H^1\times L^2\times \dot H^{-1}(\R)$.
  This is because uncontrolled $H^2$-terms emerge when considering semi-norm
  $\dot H^1$ for the solution $u$ of the Schr\"odinger equation. We show
  $L^\infty$ decay instead.
\end{remark}

\begin{remark}
  The result also holds for a generalized Zakharov system when adding a
  potential term $|u|^pu$ in the Schr\"odinger equation. See \cite{M} for the
  details on how to treat the new non-linear term.
\end{remark}

\begin{remark}
  In \cite{JCS1, JCS2}, the authors study the asymptotic behavior of the
  Zakharov-Rubenchik system. They prove that solutions blow up if the energy is
  negative and give inestability results for the solitary wave in the case
  $d=3$. Such system is of special interest since in the supersonic limit it
  has been proven that it converges to \eqref{Zakharov}.
\end{remark}

\medskip

The proof of Theorem \ref{theorem1} is based on the use of suitable virial
identities. The argument follows from \cite{KMM1,KMM2}, where the authors deal
with the Klein Gordon case. The idea is to argue as in \cite{M}, where a
functional adapted to the momentum for the nonlinear Sch\"odinger equation was
considered. Unfortunately, the identity used for the NLS equation, which allows
us to conclude, it is not appropiate in this case. Instead, as in \cite{M1, M2},
 we need to work with a virial identity that comes from the quantity
  \begin{equation*}
    \begin{aligned}
      P(t):=\im \intR u(t,x)\overline{u}_x(t,x) dx
      - \intR v(t,x)n(t,x) \text{ } dx.
    \end{aligned}
  \end{equation*}
Such virial has an uncontrolled term that we manage by adding the condition
\eqref{hyp}.

\medskip

Our second result deals with decay in far field regions along curves.

\begin{theorem}\label{theorem2}
	Assume $E_s<\infty$ and $M_s<\infty$. Let $(u, n, v)$ satisfy \eqref{ZSb}.
	\begin{enumerate}
		\item If
    $(u, n, v) \in C\left(\R^+, H^1(\R)\times L^2(\R)\times L^2(\R)\right)$,
    then, for any $\mu \in C^1(\R)$ satisfying
    \\$\mu(t)\gtrsim t\log(t)^{1+\delta}$, $\delta>0$,
			\begin{equation}\label{tesis1theorem2}
			\lim_{t \to \infty} \|u(t)\|_{L^2(|x|\sim \mu(t))}=0.
			\end{equation}
		\item If
    $(u, n, v) \in C\left(\R^+, H^2(\R)\times L^2(\R)\times L^2(\R)\right)$
    and there exists $f(t)\in C^1(\R)$ a
    non-decreasing function such that
			\begin{equation}\label{hip2theorem2}
			\|u(t)\|_{H^2(\R)}\lesssim f(t),
			\end{equation}
		then, for any $\mu \in C^1(\R)$ satisfying
    $\mu(t)\gtrsim t\log(t)^{1+\delta}f(t)$, $\delta>0$,
			\begin{equation}\label{tesis2theorem2}
			\lim_{t \to \infty} \|u(t)\|_{H^1(|x|\sim \mu(t))}
      +\|n(t)\|_{L^2(|x|\sim \mu(t))}+\|v(t)\|_{L^2(|x|\sim \mu(t))} = 0.
	\end{equation}
	\end{enumerate}
\end{theorem}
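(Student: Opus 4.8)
The plan is to bound the far-field $L^2$-mass, the far-field $H^1$-energy of $u$, and the far-field wave energy of $(n,v)$ by weighted virial functionals whose weight is a fixed cutoff transported at the \emph{superluminal} speed $\mu(t)$, and to extract decay from the competition between an integrable flux term and a sign-definite transport term. For part \emph{a)}, fix an even cutoff $\phi$, nondecreasing in $|x|$, with $\phi'$ supported on an annulus $\{|y|\sim1\}$, and set
\[
I(t):=\intR \phi(x/\mu(t))\,|u(t,x)|^2\,dx .
\]
The first equation of \eqref{ZSb} gives the local mass law $\partial_t|u|^2=-2\partial_x\im(\overline{u}u_x)$, so that
\[
I'(t)=\frac{2}{\mu}\intR \phi'(x/\mu)\,\im(\overline{u}u_x)\,dx-\frac{\mu'}{\mu}\intR \frac{x}{\mu}\,\phi'(x/\mu)\,|u|^2\,dx .
\]
Since $\phi$ is nondecreasing in $|x|$, the last term equals $-\tfrac{\mu'}{\mu}\intR \tfrac{|x|}{\mu}|\phi'|\,|u|^2\le0$, a dissipative term, while the first is a flux bounded by $\tfrac{C}{\mu}M_s^{1/2}E_s^{1/2}$ via Cauchy--Schwarz. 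The whole point of $\mu(t)\gtrsim t\log(t)^{1+\delta}$ with $\delta>0$ is that $\int^\infty dt/\mu(t)<\infty$, so the flux is time-integrable.

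Integrating and using $0\le I\le M_s$, the integrability of the flux forces the good term to be integrable as well, i.e. $\int_1^\infty \tfrac{\mu'}{\mu}\|u(t)\|_{L^2(|x|\sim\mu)}^2\,dt<\infty$, and moreover $I(t)$ (and each analogous tail functional at radius $a\mu$) converges as $t\to\infty$. Writing the annular mass as a difference of two such convergent tail functionals shows that $\|u(t)\|_{L^2(|x|\sim\mu)}^2$ itself converges; since $\int^\infty \tfrac{\mu'}{\mu}\,dt=+\infty$, a finite value of $\int \tfrac{\mu'}{\mu}\|u\|_{L^2(|x|\sim\mu)}^2$ against a convergent integrand is possible only if its limit is $0$. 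This Barbalat-type step yields \eqref{tesis1theorem2}, and I expect the delicate point here to be precisely the upgrade from the $\liminf$ supplied by the spacetime bound to a genuine limit, which the convergence of the tail masses supplies.

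For part \emph{b)} I would treat $(n,v)$ through the wave-energy density $e=\tfrac12(n^2+v^2)$, which by the last two equations of \eqref{ZSb} obeys $\partial_t e+\partial_x(nv)=-v(|u|^2)_x$. With $W(t)=\intR\phi(x/\mu)\,e\,dx$ one gets
\[
W'(t)=\frac1\mu\intR\phi'(x/\mu)\,nv\,dx-\frac{\mu'}{\mu}\intR\frac{x}{\mu}\,\phi'(x/\mu)\,e\,dx-\intR\phi(x/\mu)\,v(|u|^2)_x\,dx .
\]
The decisive structural gain is that the wave speed is $1$ whereas $\mu'(t)\gtrsim\log(t)^{1+\delta}>1$ for large $t$: using $|nv|\le e$, the flux and transport terms combine into $\tfrac{1-\mu'}{\mu}\intR|\phi'|\,e\le0$, a dissipative term, precisely because the weight outruns the light cone, so the wave energy decays ``for free.''

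The only dangerous contribution is the nonlinear coupling $-\intR\phi\,v(|u|^2)_x=-2\intR\phi\,v\,\re(\overline{u}u_x)$, which I would bound by $2\|v\|_{L^2}\,\|u\|_{L^\infty(|x|\sim\mu)}\,\|u_x\|_{L^2}\lesssim E_s\,\|u\|_{L^\infty(|x|\sim\mu)}$. Interpolating the part-\emph{a)} decay of $\|u\|_{L^2(|x|\sim\mu)}$ against the regularity bound $\|u\|_{H^2}\lesssim f(t)$ from \eqref{hip2theorem2} via Gagliardo--Nirenberg makes $\|u\|_{L^\infty(|x|\sim\mu)}$ small, and the enlarged rate $\mu\gtrsim t\log(t)^{1+\delta}f(t)$ is exactly calibrated so that $\int^\infty f/\mu\,dt<\infty$; the same budget controls the $H^2$-sized flux terms in the parallel virial for the Schr\"odinger energy density $|u_x|^2$, giving the $H^1$-decay of $u$. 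Running the Barbalat-type argument on $W$ and on the $\|u_x\|_{L^2(|x|\sim\mu)}^2$ functional then produces \eqref{tesis2theorem2}. The hard part will be this coupling/flux control: without an extra derivative and a faster weight, neither the source $v(|u|^2)_x$ nor the $u_{xx}$-flux is time-integrable, which is why part \emph{b)} must assume $\|u\|_{H^2}\lesssim f$ and enlarge the rate to $t\log(t)^{1+\delta}f(t)$.
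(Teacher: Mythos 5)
Your part \emph{a)} is essentially the paper's argument: a mass virial with a weight transported at speed $\mu(t)$, an $O(1/\mu)$ flux made time-integrable by $\mu(t)\gtrsim t\log(t)^{1+\delta}$, a sign-definite transport term yielding the spacetime bound, and an upgrade to a genuine limit (you do this by convergence of tail functionals plus a Barbalat step, the paper by extracting a sequence $t_n$ and integrating a second virial with a cutoff $\phi$ dominated by $|\varphi'|$; both endgames work, yours needs the minor care that the annular bump you run Barbalat on must be dominated by the derivative of \emph{some} tail cutoff so that its spacetime integral is finite).

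Part \emph{b)}, however, has a genuine gap, and it is exactly at the point you flag as ``the only dangerous contribution.'' By decoupling the wave energy $e=\tfrac12(n^2+v^2)$ from the Schr\"odinger part, you are left with the coupling term $-\int\phi\, v(|u|^2)_x\,dx$, which carries \emph{no} $1/\mu$ prefactor: it is an $O(1)$ quantity in time. Your proposed remedy is to make it integrable by interpolating the part-\emph{a)} decay of $\|u\|_{L^2(|x|\sim\mu)}$ against $\|u\|_{H^2}\lesssim f(t)$. This cannot work for two reasons. First, part \emph{a)} gives a rateless limit, and smallness without a rate does not produce time-integrability: $\int^\infty \epsilon(t)\,dt$ may diverge even if $\epsilon(t)\to0$. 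Second, the coupling integral runs over the full support of $\phi$, i.e.\ the whole tail $\{|x|\gtrsim a\mu(t)\}$, whereas part \emph{a)} only controls annuli $|x|\sim\mu(t)$; the tail functionals in your own argument converge but are not shown to converge to zero. The same obstruction reappears, worse, in your ``parallel virial for $|u_x|^2$'': differentiating $\int\phi|u_x|^2$ produces the term $2\int\phi\, n_x\,\mathrm{Im}(u\bar u_x)\,dx$, and $n_x$ is not controlled at all (only $n\in L^2$); after integration by parts it becomes an $O(\|n\|_{L^2}\|u\|_{L^\infty}\|u_{xx}\|_{L^2})=O(f(t))$ term, again non-integrable. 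Since these coupling terms also enter the evolution of your functional $W$, even the convergence needed for your Barbalat step fails.

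The paper's proof is built precisely to avoid this: it uses a \emph{single} functional $J$ whose density is the full local energy $|u_x|^2+\tfrac12|v|^2+\tfrac12|n|^2+n|u|^2+|u|^2$, including the interaction term $n|u|^2$. With this choice the dangerous couplings cancel into total derivatives, e.g.
\begin{equation*}
-\intR \varphi\, v\,(|u|^2)_x\,dx-\intR \varphi\, v_x\,|u|^2\,dx
=-\intR \varphi\,\partial_x\big(v|u|^2\big)\,dx
=\frac{1}{\lambda}\intR \varphi'\, v|u|^2\,dx,
\end{equation*}
and similarly the $n_x$-terms from the $|u_x|^2$ part cancel against $\partial_t\!\int\varphi\, n|u|^2$. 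As a result \emph{every} non-transport term in the identity \eqref{VirialFromTheEnergy} carries a $1/\lambda$ prefactor and involves only controlled quantities ($u_{xx}$ appears only in the flux $\tfrac1\lambda\varphi'\bar u_x u_{xx}$, which is where the hypothesis \eqref{hip2theorem2} and the enlarged rate $\lambda=t\log(t)^{1+\delta}f(t)$ are used). The leftover sign-indefinite transport term $n|u|^2$ is absorbed by Young's inequality together with the part-\emph{a)} spacetime bound, and the interaction term in the limiting functional is removed at the very end by H\"older and \eqref{tesis1theorem2}. If you want to salvage your decoupled scheme, you would have to prove a quantitative (integrable-in-time) far-field decay rate for $u$, which is not available from the tools at hand; the structurally correct fix is to adopt the combined density.
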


A direct consequence of the proof of Theorem \ref{theorem2} is the following
result for the NLS equation:

\begin{corollary}
 Let $u(t) \in H^1(\R)$ be a solution of the non-linear Schr\"odinger equation
  \[
    iu_t+u_{xx}\pm|u|^{p-1}u=0,
  \]
where $1<p<5$, with initial data $u(t=0, x)=u_0$ satisfying
$\|u(t=0)\|_{H^1(\R)}<\infty$. Then,
  \begin{equation*}
    \lim_{t \to \infty} \|u(t)\|_{L^2(|x|\sim \mu(t))}=0.
  \end{equation*}
\end{corollary}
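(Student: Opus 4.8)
The plan is to recognise that this corollary is nothing but the decoupled, scalar version of Theorem \ref{theorem2}(1), obtained by switching off the wave unknowns ($n=v=0$) and replacing the coupling $nu$ by the real-coefficient nonlinearity $\pm|u|^{p-1}u$. The only structural input from the Schr\"odinger equation that the proof of Theorem \ref{theorem2}(1) uses is the local conservation law for the mass density, and this survives verbatim here: writing $u_t=i(u_{xx}\pm|u|^{p-1}u)$ and taking real parts, the nonlinear contribution $\re(i|u|^{p+1})$ vanishes, so that
\begin{equation*}
\partial_t|u|^2+\partial_x j=0,\qquad j:=2\,\im(\bar u u_x).
\end{equation*}
Thus $|u|^{p-1}u$ plays exactly the same inert role in the continuity equation as $nu$ did in \eqref{ZSb}. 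The nonlinearity enters only through global well-posedness: since $1<p<5$ is mass-subcritical, conservation of the mass and energy together with the Gagliardo--Nirenberg inequality produce a global solution with $\sup_{t\ge0}\|u(t)\|_{H^1(\R)}=:C_0<\infty$, which is all the regularity Theorem \ref{theorem2}(1) requires.

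With this in hand I would run the weighted virial of Theorem \ref{theorem2}(1). Fix a smooth nondecreasing profile $\varphi$ with $\varphi\equiv0$ on $(-\infty,\tfrac12]$ and $\varphi\equiv1$ on $[1,\infty)$, and (working on the right half-line, the left being symmetric) set
\begin{equation*}
I(t):=\intR\varphi\!\left(\frac{x}{\mu(t)}\right)|u(t,x)|^2\,dx,
\end{equation*}
so that $\|u(t)\|_{L^2(|x|\sim\mu(t))}^2\lesssim I(t)$. Differentiating and using the continuity equation splits $I'$ into a transport term $-\tfrac{\mu'}{\mu^2}\int\varphi'(x/\mu)\,x\,|u|^2\,dx$, which carries a favourable sign once $\mu$ is taken nondecreasing, and a current term $\tfrac1\mu\int\varphi'(x/\mu)\,j\,dx$. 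Since $\varphi'$ is supported where $x\sim\mu(t)$, Cauchy--Schwarz and the uniform bound give
\begin{equation*}
\left|\frac1\mu\intR\varphi'\!\left(\frac{x}{\mu}\right)j\,dx\right|\le\frac{C}{\mu(t)}\|u\|_{L^2}\|u_x\|_{L^2}\le\frac{C_0^2\,C}{\mu(t)}.
\end{equation*}

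Now the hypothesis $\mu(t)\gtrsim t\log(t)^{1+\delta}$ with $\delta>0$ is exactly what makes $\int^\infty \mu(t)^{-1}\,dt<\infty$: the current term is integrable in time, while the transport term has the good sign. Integrating $I'$ then shows simultaneously that $I$ converges to a limit $\ell\ge0$ and that $\int_0^\infty\tfrac{\mu'}{\mu}\|u(t)\|_{L^2(|x|\sim\mu(t))}^2\,dt<\infty$. The main obstacle, and the only genuinely delicate point, is upgrading this time-integrability to the pointwise statement $\ell=0$: integrability against $d(\log\mu)$, a measure of infinite total mass, yields decay only along a sequence. I expect to close this exactly as in Theorem \ref{theorem2}, combining the monotone (good-sign) structure of the transport term with the integrable current error, and, where a direct argument is wanted, the conserved momentum identity $\frac{d}{dt}\intR x|u|^2\,dx=2\,\im\intR\bar u u_x\,dx=\mathrm{const}$, which forces the mass to travel at most linearly in $t$ and therefore prevents any fixed amount of it from keeping pace with a barrier sitting at $\mu(t)\gg t$. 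This contradicts $\ell>0$, so $I(t)\to0$ and the claimed far-field decay follows; the identical computation with $\varphi$ promoted to an $H^1$-level weight recovers Theorem \ref{theorem2}(2), only the $L^2$ statement being needed here.
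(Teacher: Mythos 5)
Your reduction is the paper's own: the corollary is indeed obtained by rerunning the mass virial of Theorem \ref{theorem2}(1), since the nonlinearity $\pm|u|^{p-1}u$ drops out of the continuity equation $\partial_t|u|^2+\partial_x\bigl(2\,\im(\bar{u}u_x)\bigr)=0$, and the uniform $H^1$ bound coming from mass/energy conservation plus Gagliardo--Nirenberg (this is where $1<p<5$ enters) replaces Lemma \ref{LemmaEstimationOfTermsOnTheEnergy}. Up to the two key facts --- finiteness of $\int^\infty\frac{\mu'(t)}{\mu(t)}\bigl(\intR\varphi'(x/\mu(t))|u(t,x)|^2dx\bigr)dt$, and, by non-integrability of $\mu'/\mu$, decay of this annulus quantity along some sequence $t_n\to\infty$ --- your argument coincides with the paper's (your single scale $\mu=\lambda$ and the right half-line instead of the left are harmless changes).

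The gap is in how you close. You conflate two different quantities: the sequential decay you obtain concerns the \emph{annulus} mass (weight $\varphi'$, supported where $x\sim\mu$), whereas $\ell$ is the limit of the \emph{exterior} mass $I(t)$ (weight $\varphi$, equal to $1$ on $[\mu,\infty)$). Convergence of $I$ together with sequential decay of the annulus mass does not force $\ell=0$: at the times $t_n$ the escaping mass may simply sit far beyond the annulus. Moreover, the ``direct argument'' you invoke to rule out $\ell>0$ fails on three counts: the quantity $\intR x|u|^2dx$ is not even defined under the hypotheses (only $u_0\in H^1$ is assumed, not a finite first moment); when it is defined it is a \emph{signed} moment, so symmetric spreading (even data, zero momentum) transports mass to $\pm\mu(t)$ while the center of mass stays fixed; and the Schr\"odinger flow has infinite speed of propagation, so ``mass travels at most linearly'' is false pointwise. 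None of this is needed, because the corollary is a statement about the annulus $|x|\sim\mu(t)$ only. The paper's closing step, absent from your sketch, is a second test function $\phi\in C_0^1(\R)$ supported in the annulus with $\phi\lesssim|\varphi'|$ and $|\phi'|\lesssim|\varphi'|$: then $\frac{d}{dt}\intR\phi(x/\mu(t))|u(t,x)|^2dx$ is bounded by $C/\mu(t)$ (integrable in time) plus $\frac{\mu'(t)}{\mu(t)}\intR|\varphi'(x/\mu(t))||u(t,x)|^2dx$ (integrable by your finiteness step), so this annulus mass has derivative in $L^1_t$; since it also vanishes along $t_n$ (again because $\phi\lesssim|\varphi'|$), it tends to zero as $t\to\infty$, which is exactly \eqref{tesis1theorem2} for NLS. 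Replacing your momentum argument by this domination trick repairs the proof; your claim $I(t)\to 0$ is a strictly stronger assertion that your argument does not establish.
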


The proof of Theorem \ref{theorem2} follows an argument recently introduced by
Mu\~noz, Ponce and Saut in \cite{MPS}, where they deal with the long time
behavoir of intermidiate long wave equation. This method proves to be
independent of the integrability of the equation and does not need size
restriction. However, when dealing with the Zakharov system, because of the
presence of uncontrolled $H^2$-terms in the dynamics of the $H^1$-norm of $u$,
we need the additional condition \eqref{hip2theorem2}. Note that such condition
allows as to obtain decay of the $\|\cdot\|_{H^1}$-norm, which was not present
in results established in \cite{MPS}.

\subsection{Main results for Klein-Gordon-Zakharov system}

We will consider system \eqref{KGZ} with $\alpha=c=1$, although the computations
still hold for different values of $\alpha, c \in \R$. As we did for the
Zakharov system \eqref{Zakharov}, we reduce \eqref{KGZ} by introducing a real
function $v$ satisfiyng $-\nabla \cdot v=n_t$ for all $t\ge 0$. That is, we get
a new first order system,
  \begin{equation}\label{KGZSv}
    \begin{aligned}
      &u_{tt} - u_{xx} + u = -nu,\\
      & n_{t} +  v_x= 0,\\
      &v_{t} + \left( n +|u|^2\right)_x= 0,
    \end{aligned}
  \end{equation}
and
  \[
  \begin{aligned}
    u(t=0, x)=u_0(x), \quad  u_t(t=0, x)=u_1(x) \\
    n(t=0, x)=n_0(x), \quad  v(t=0, x)=v_0(x).
  \end{aligned}
  \]
The system \eqref{KGZSv} preserves:
\begin{itemize}
	\item Energy:
	\begin{equation}\label{KGCQ1}
	\begin{aligned}
	E_{KG}(t):=& \intR |u(t,x)|^2 + |u_x(t,x)|^2 + |u_t(t,x)|^2 \\
  &+ \frac{1}{2}\Big(|n(t,x)|^2
  +|v(t,x)|^2\Big) + n(t,x)|u(t,x)|^2dx
	=E_{KG}(0),
	\end{aligned}
	\end{equation}
	\item Momentum:
	\begin{equation}\label{KGCQ2}
	\begin{aligned}
	P_{KG}(t):= \intR u_t(t,x){u}_x(t,x) dx
  - \frac12\intR v(t,x)n(t,x) \text{ } dx = P_{KG}(0).
	\end{aligned}
	\end{equation}
\end{itemize}

As we did for \eqref{ZSb}, we prove decay of solutions to \eqref{KGZSv} in two
different ways: over compact intervals of time and over far field regions along
curves. Our result for compact intervals is the following:

\begin{theorem}\label{theorem3}
	Assume $E_{KG}<\infty$. Let
  $(u, u_t, n, v) \in C\left(\R^+, H^1(\R)\times L^2(\R)
  \times L^2(\R)\times L^2(\R)\right)$
  be a solution of \eqref{ZSb} such that $u$ is odd and satisfies
  \begin{equation}\label{KGhyp}
  \sup_{t\ge 0} \|u(t)\|_{H^1(\R)}\le \varepsilon \quad \text{and} \quad
  \sup_{t\ge 0} \|u_t(t)\|_{L^2(\R)}\le C
\end{equation}
for some $C>0$ and $\varepsilon>0$ small.
	Then, for every compact interval $I\subset \R$,
		\begin{equation}\label{thesisKG}
		\lim_{t \to \infty} \|u(t)\|_{H^1(I)}
    +\|u_t(t)\|_{L^2(I)}
    +\|n(t)\|_{L^2(I)}+\|v(t)\|_{L^2(I)} = 0.
		\end{equation}
\end{theorem}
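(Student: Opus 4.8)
The plan is to prove decay over compact intervals for the Klein-Gordon-Zakharov system \eqref{KGZSv} by adapting the virial argument already outlined for Theorem~\ref{theorem1}, now accounting for the extra variable $u_t$ that replaces the Schr\"odinger structure. I would introduce a smooth, bounded, localizing weight $\varphi$ (for instance $\varphi(x)=\tanh(x)$ or a cutoff approximating $\operatorname{sgn}(x)$, taking advantage of the oddness of $u$) and consider a momentum-type virial functional of the form
\begin{equation*}
I(t) := \intR \varphi(x)\, u_t(t,x)\, u_x(t,x)\, dx - \frac{1}{2}\intR \varphi(x)\, v(t,x)\, n(t,x)\, dx,
\end{equation*}
modeled on the conserved momentum $P_{KG}$ in \eqref{KGCQ2} but spatially weighted. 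The strategy of \cite{KMM1,KMM2,M} is that, after differentiating $I(t)$ in time and using the equations \eqref{KGZSv}, one obtains a lower bound: $\frac{d}{dt}I(t)$ controls, up to error terms, a positive definite local quadratic quantity of the shape $\intR \varphi'(x)\big(|u_t|^2+|u_x|^2+|u|^2+|n|^2+|v|^2\big)\,dx$ on the region where $\varphi'$ is concentrated, which is precisely the compact interval $I$ we care about.

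The key steps, in order, are as follows. First I would compute $\frac{d}{dt}I(t)$ using the three evolution equations, substituting $u_{tt}=u_{xx}-u-nu$, $n_t=-v_x$, and $v_t=-(n+|u|^2)_x$, and integrating by parts to move derivatives onto $\varphi$. Second, I would collect the principal terms and show they assemble into a coercive local energy density weighted by $\varphi'$; the conservation laws \eqref{KGCQ1}--\eqref{KGCQ2} and the smallness hypothesis $\|u(t)\|_{H^1}\le\varepsilon$ from \eqref{KGhyp} are then used to absorb the cubic nonlinear contributions (the terms involving $n|u|^2$ and $|u|^2 v_x$) into the quadratic part, since these carry a factor that is small in $\varepsilon$. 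Third, having established $\frac{d}{dt}I(t)\gtrsim \int_{I}(|u_t|^2+|u_x|^2+|u|^2+|n|^2+|v|^2)\,dx - (\text{controlled errors})$ together with a uniform bound $|I(t)|\lesssim 1$ coming from \eqref{KGhyp} (here the second bound $\|u_t(t)\|_{L^2}\le C$ is essential to keep $I(t)$ bounded, since $u_t$ is no longer controlled by the energy alone the way $u$ is in the Schr\"odinger case), I would integrate in time over $[0,T]$ to deduce that the local energy is integrable in time, and conclude via a standard argument (extracting a sequence along which the local norm vanishes, then upgrading to a genuine limit using uniform continuity of the local energy in $t$).

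The main obstacle I anticipate is the same uncontrolled higher-order phenomenon flagged in the remarks for Theorem~\ref{theorem1}: when differentiating the weighted momentum, error terms arise whose control requires either smallness or extra regularity. In the Schr\"odinger setting \cite{M} this manifests as uncontrolled $H^2$ contributions; in the Klein-Gordon case the analogous difficulty is that $u_t$ enters at top order and is not absorbed by the conserved energy in a coercive way, which is exactly why hypothesis \eqref{KGhyp} imposes \emph{both} $\|u\|_{H^1}\le\varepsilon$ \emph{and} $\|u_t\|_{L^2}\le C$. The delicate point will be verifying that the cross terms produced by the wave operator (in particular those mixing $u_x$ with $u_t$ and the nonlinearity $nu$) can be split into a coercive local piece plus a remainder that is either small in $\varepsilon$ or integrable in time by Cauchy--Schwarz against the already-integrable local quantities. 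I would expect to need to carefully choose $\varphi$ and possibly a second auxiliary functional (an energy-type virial in addition to the momentum-type one) to close the coercivity estimate, mirroring the two-functional scheme used in \cite{KMM1,KMM2}.
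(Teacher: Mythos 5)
Your overall strategy --- a weighted momentum-type virial, time integration of a coercive identity, then a sequential-limit upgrade --- is indeed the paper's strategy, but the functional you propose does not work, and the missing ingredient is precisely the non-obvious point of the paper's proof. If you differentiate your
\[
I(t)=\intR \varphi\, u_t u_x\,dx - \frac12 \intR \varphi\, v n\, dx
\]
along \eqref{KGZSv} and integrate by parts, the cubic contributions conveniently combine (since $\intR\varphi\,(nu^2)_x\,dx = -\intR \varphi'\, nu^2\,dx$), but the mass term $+u$ in the Klein--Gordon equation leaves
\[
-\deri I(t) = \frac12\intR \varphi'\left(u_t^2+u_x^2\right)dx - \frac12\intR \varphi'\, u^2\,dx +\frac14 \intR \varphi'\left(n^2+v^2\right)dx,
\]
and the term $-\frac12\intR\varphi' u^2\,dx$ has the wrong sign. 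It is quadratic, of exactly the same order as the coercive terms, so it cannot be absorbed by smallness: hypothesis \eqref{KGhyp} only makes it uniformly bounded in time, not time-integrable, and no Hardy-type inequality for odd functions bounds $\intR\varphi' u^2$ by the \emph{local} quantity $\intR\varphi' u_x^2$ (oddness only gives $\intR \varphi' u^2\,dx \lesssim \|u_x\|_{L^2(\R)}^2$, a global bound that is useless after integration in time). Thus your claimed lower bound $\deri I \gtrsim \int_I(|u_t|^2+|u_x|^2+|u|^2+\cdots)$ is false for this $I$. The paper's fix is to add a correction term and work with
\[
I(t)=2\intR \varphi\, u_x u_t\,dx - \intR\varphi\, v n\,dx + \intR \varphi'\, u\, u_t\,dx;
\]
differentiating the extra piece produces $\intR\varphi' u_t^2\,dx + \intR \varphi'\, u(u_{xx}-u-nu)\,dx$, which simultaneously cancels the $u_t^2$ terms, the wrong-sign $u^2$ term, and one copy of the cubic term, leaving exactly the same right-hand side as the Zakharov virial \eqref{virial}. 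This is the $\int(\varphi u_x + \tfrac12\varphi' u)u_t$ device of \cite{KMM1}, and it is what lets the paper reuse the oddness-based coercivity lemmas of Section \ref{VirialArgument} verbatim.

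A second, related inaccuracy: even with the corrected functional, $|u_t|^2$ never appears coercively in the virial identity (it cancels), so the decay of $\|u_t\|_{L^2(I)}$ cannot be read off at this stage, contrary to what your coercive quantity suggests. In the paper it comes only at the final step, from a second functional --- the weighted local energy $\frac12\intR\sech(x)\left(u^2+u_t^2+u_x^2+n^2+v^2\right)dx$ --- whose time derivative is bounded by the weighted norms already proved integrable; integrating that bound on $[t,t_n]$ along a sequence where the local energy vanishes and letting $t_n\to\infty$, then $t\to\infty$, gives the full limit \eqref{thesisKG}. Your closing remark that a ``second auxiliary functional'' might be needed gestures in this direction, but its role in the paper is to convert time-integrability into convergence (and to capture $u_t$), not to close the coercivity estimate; the coercivity is closed entirely by the correction term above.
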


\begin{remark}
  The oddness condition rules out solitary waves. Indeed, solitary waves of
  \eqref{KGZSv} exist and they are orbitally stable. They were first introduced
  by Chen in \cite{C}, where he stated that solitons of the form
    \begin{gather*}
      u(t,x)=e^{-i\omega t} e^{iq(x-ct)}u_{\omega, c}(x-ct),\\
      n(t,x)=n_{\omega, c}(x-ct),
    \end{gather*}
  with
    \begin{gather*}
      u_{\omega, c}(x)=\sqrt{2(1-c^2-\omega^2)}
      \sech\left(\frac{\sqrt{1-c^2-\omega^2}}{1-c^2}x\right),\\
      n_{\omega, c}(x)=-2\frac{(1-c^2-\omega^2)}{1-c^2}
      \sech^2\left(\frac{\sqrt{1-c^2-\omega^2}}{1-c^2}x\right),
      \quad q=\frac{\omega c}{1-c^2}.
    \end{gather*}
  exists when the real constants $\omega$ and $c$ satisfy $1-c^2-\omega^2>0$.
  There also exist solitary waves
  $(u_{\omega, c}, {(u_t)}_{\omega, c}, n_{\omega, c}, v_{\omega, c})$ of
  \eqref{KGZSv} of the form
    \begin{gather*}
      u_{\omega, c}(x)=\sqrt{2(1-c^2-\omega^2)}
      \sech\left(\frac{\sqrt{1-c^2-\omega^2}}{1-c^2}x\right)
      e^{i\frac{\omega c}{1-c^2}x},\quad
      {(u_t)}_{\omega, c}=\left(i\omega
      +c\frac{\partial }{\partial x}\right)u_{\omega, c}(x),\\
      n_{\omega, c}(x)=-2\frac{(1-c^2-\omega^2)}{1-c^2}
      \sech^2\left(\frac{\sqrt{1-c^2-\omega^2}}{1-c^2}x\right), \\
      v_{\omega, c}(x)=2c\frac{(1-c^2-\omega^2)}{1-c^2}
      \sech^2\left(\frac{\sqrt{1-c^2-\omega^2}}{1-c^2}x\right),
    \end{gather*}
  with $1-2c^2-2\omega^2<0$ and are orbitally stable \cite{C}.
\end{remark}

\begin{remark}
  The result holds when cosidering cubic nonlinear KGZ system, that is, when
  adding an additional term $|u|^2u$ in the Klein Gordon equation. We do not
  adress this case in the proof, but it follows naturally from the analysis of
  the non-linear term in \cite{KMM1}.
\end{remark}

\medskip

The proof of this results follows more closely the idea in \cite{KMM1}. Indeed,
we construct a virial identity that comes from the momentum:
  \[
    P_{KG}=\intR u_t(t,x)u_x(t,x)dx - \frac12 \intR v(t,x)n(t,x)dx.
  \]
But, since there are uncontrolled terms involving $u$ and $u_t$ in the identity
from the potential, we need to consider $u_t$ uniformly bounded.
Notice that we obtain now decay in the whole energy norm (that is, even for the
$H^1$-norm).

\medskip

\par The last theorem is devoted to the decay of the solutions to \eqref{KGZSv}
in regions along curves outside the light cone:

\begin{theorem}\label{theorem4}
	Assume $E_{KG}<0$. If
  $(u, u_t, n, v) \in C\left(\R^+, H^1(\R)\times L^2(\R)
  \times L^2(\R)\times L^2(\R)\right)$ is a solution to \eqref{KGZSv}
  such that
      \begin{equation}\label{hip1theorem4}
        \sup_{t\ge 0}\|u(t)\|_{H^1(\R)}\le \varepsilon
      \end{equation}
    for some $0<\varepsilon\le1$,
		then, for any $\mu \in C^1(\R)$ satisfying
    $\mu(t)\gtrsim t\log(t)^{1+\delta}$, $\delta>0$,
			\begin{equation}\label{tesis1theorem4}
			\lim_{t \to \infty} \|u(t)\|_{H^1(|x|\sim \mu(t))}
      +\|u_t(t)\|_{L^2(|x|\sim \mu(t))}
      +\|n(t)\|_{L^2(|x|\sim \mu(t))}+\|v(t)\|_{L^2(|x|\sim \mu(t))} = 0.
	\end{equation}
\end{theorem}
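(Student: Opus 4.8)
The plan is to transport the far-field virial scheme of Theorem \ref{theorem2} (following \cite{MPS}) to the hyperbolic setting, taking advantage of the fact that here $u$ is real-valued, which makes the local energy balance especially clean. First I would record the pointwise conservation law for the energy density
\[
e(t,x) := u^2 + u_x^2 + u_t^2 + \tfrac12\big(n^2 + v^2\big) + n u^2 .
\]
Differentiating in $t$ and substituting the three equations of \eqref{KGZSv}, the Klein-Gordon flux $2u_tu_x$ and the wave flux $-nv$ appear; crucially, the two coupling contributions $-2nuu_t$ (from the $u$-energy) and $+2nuu_t$ (from $\partial_t(nu^2)=-v_xu^2+2nuu_t$) cancel exactly because $u$ is real, and $-v(u^2)_x-v_xu^2=-\partial_x(vu^2)$. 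This yields
\[
\partial_t e = \partial_x \Phi, \qquad \Phi := 2u_tu_x - nv - v u^2 .
\]

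Next I would use the smallness hypothesis \eqref{hip1theorem4} together with the one-dimensional estimate $\|u\|_{L^\infty}\le\|u\|_{H^1}\le\varepsilon\le 1$ to establish that the energy density is coercive: absorbing the indefinite $nu^2$ by $-\tfrac12 n^2-\tfrac12 u^4$ and the quartic term by $u^4\le\varepsilon^2 u^2$ gives $e \gtrsim u^2 + u_x^2 + u_t^2 + n^2 + v^2$ pointwise, and likewise $|\Phi|\lesssim e$. This is exactly the step where the smallness of $u$ and the a priori control furnished by conservation of $E_{KG}$ enter, and it guarantees that a weighted energy dominates the norm in \eqref{tesis1theorem4}, namely $\int_{|x|\sim\mu}e\,dx \gtrsim \|u\|_{H^1(|x|\sim\mu)}^2+\|u_t\|_{L^2(|x|\sim\mu)}^2+\|n\|_{L^2(|x|\sim\mu)}^2+\|v\|_{L^2(|x|\sim\mu)}^2$.

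The heart of the argument is then the weighted virial. Fixing a smooth non-decreasing cut-off $\psi:[0,\infty)\to[0,1]$ with $\psi\equiv 0$ near $0$ and $\psi\equiv 1$ near infinity, I would set $\mathcal{I}(t):=\intR \psi(|x|/\mu(t))\,e(t,x)\,dx$ and differentiate. The time-dependence of the weight produces the good term $-\tfrac{\mu'}{\mu}\int \tfrac{|x|}{\mu}\psi'(|x|/\mu)\,e\,dx\le 0$ (here $\mu'>0$ and, by coercivity, $e\ge0$), while integrating $\int\psi\,\partial_x\Phi$ by parts produces an error $-\tfrac1\mu\int \mathrm{sgn}(x)\,\psi'(|x|/\mu)\,\Phi\,dx$ controlled by $\tfrac{C}{\mu(t)}\intR e\,dx\lesssim \tfrac{C}{\mu(t)}$ using $|\Phi|\lesssim e$ and conservation of $E_{KG}$. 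The role of $\mu(t)\gtrsim t\log(t)^{1+\delta}$ with $\delta>0$ is precisely to make this error integrable in time, since $\int^\infty \frac{dt}{t\log(t)^{1+\delta}}<\infty$.

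From $\mathcal{I}'(t)\le C/\mu(t)$ I would deduce that $\mathcal{I}(t)-\int_{t_0}^t C/\mu\,ds$ is non-increasing and bounded below, hence $\mathcal{I}$ converges, and moreover $\int_{t_0}^\infty \tfrac{\mu'}{\mu}\big(\int \tfrac{|x|}{\mu}\psi'\,e\,dx\big)\,dt<\infty$; since $\psi'(|x|/\mu)$ localizes to $|x|\sim\mu(t)$, where $|x|/\mu\sim1$, the inner integral is comparable to $\int_{|x|\sim\mu}e\,dx$. The main obstacle, exactly as in \cite{MPS}, is that the good-term weight $\mu'/\mu\sim 1/t$ is \emph{not} integrable, so finiteness of the time-integral only yields $\liminf_{t\to\infty}\int_{|x|\sim\mu}e\,dx=0$ along a sequence; upgrading this to a genuine limit is the delicate point and requires combining the almost-monotonicity of $\mathcal{I}$ with a slow-variation (dyadic) argument showing that the far-field energy cannot stay bounded away from zero on intervals $[T,2T]$ without forcing $\int^\infty \frac1t\int_{|x|\sim\mu}e\,dx\,dt=+\infty$. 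Completing that step, and verifying the coercivity constants are uniform in time, is where the technical weight of the proof lies; the resulting limit, together with the coercivity of Paragraph~2, is precisely \eqref{tesis1theorem4}.
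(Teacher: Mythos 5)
Your strategy is essentially the paper's own (a time-rescaled weighted energy in the spirit of \cite{MPS}: pointwise conservation law $\partial_t e=\partial_x\Phi$, coercivity of the localized density via \eqref{hip1theorem4}, flux errors of size $1/\mu(t)$ which are integrable because $\mu(t)\gtrsim t\log(t)^{1+\delta}$, and the non-integrable weight $\mu'/\mu$ to force decay), and your computation of $e$ and $\Phi$ is correct. But there is a genuine gap precisely at the step you yourself flag as ``delicate'': passing from $\liminf_{t\to\infty}\int_{|x|\sim\mu(t)}e\,dx=0$ to the full limit \eqref{tesis1theorem4}. Neither of the two mechanisms you invoke closes it. Convergence of $\mathcal{I}(t)=\intR\psi(|x|/\mu(t))\,e\,dx$ is of no direct use, because $\mathcal{I}$ measures \emph{all} of the energy beyond $\mu(t)$: energy can be exchanged between the transition shell $|x|\sim\mu(t)$ and the region where $\psi\equiv1$ while $\mathcal{I}$ stays essentially constant, so convergence of $\mathcal{I}$ to a (possibly nonzero) limit does not constrain oscillations of the annulus energy. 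And a ``slow-variation/dyadic'' argument cannot work on its own: a nonnegative function $a(t)$ with $\int^\infty a(t)\,t^{-1}dt<\infty$ need not tend to zero unless one also controls how fast $a$ can vary. What is missing is exactly that derivative control for the \emph{annulus} energy, and this is what the paper supplies: it introduces a second, narrow cutoff $\phi\in C_0^1$, supported in the transition region, with $\phi\lesssim|\varphi'|$ and $|\phi'|\lesssim|\varphi'|$, so that $\bigl|\frac{d}{dt}\intR\phi\,e\,dx\bigr|$ is bounded by $C/\lambda(t)$ (integrable) plus $\frac{\mu'}{\lambda}\intR|\varphi'|e\,dx$, whose time integral is finite by the first step. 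Integrating this over $[t,t_n]$ along the sequence of \eqref{TermsTendingToZeroEnergyKG}, letting $t_n\to\infty$ and then $t\to\infty$ yields the limit. Your proof needs this (or an equivalent) argument written out; as it stands it stops short of the theorem.

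There is also a small, fixable slip in your coercivity paragraph: you absorb $nu^2$ by $-\tfrac12 n^2-\tfrac12 u^4$, but the density $e$ carries only $\tfrac12 n^2$, so this choice annihilates the $n^2$ term entirely and gives no lower bound on $\|n(t)\|_{L^2(|x|\sim\mu(t))}$. You must split asymmetrically, as the paper does in \eqref{final}, e.g. $|n|u^2\le\tfrac13 n^2+\tfrac34 u^4$, which leaves $\tfrac16 n^2$ and, using $u^4\le\varepsilon^2u^2\le u^2$, still controls the quartic term.
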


\subsection*{Notation}

  We introduce
 	\begin{equation}\label{WeightedNorms}
    \begin{gathered}
 	    \|u(t)\|_{L^2_{\omega}(\R)}^2 := \intR \omega(x) |u(t,x)|^2dx,\\
 	    \|u(t)\|_{H^1_{\omega}(\R)}^2 := \intR \omega (x) \left(|u_x(t,x)|^2
      + |u(t,x)|^2\right)dx,
    \end{gathered}
 	\end{equation}
 	as the weighted $L^2$-norm and $H^1$-norm.

\medskip

This paper is organized as follows. In Section \ref{DecayOnCompactIntervals} we
 prove Theorem \ref{theorem1}; the virial argument is given in Subsection
 \ref{VirialArgument}. Section \ref{DecayinFarRegions} is devoted to the proof
 of Theorem \ref{theorem2}. Sections \ref{KGOnCompactIntervals} and
 \ref{KGFarFieldRegions} contain the KGZ system results, Theorems \ref{theorem3}
  and \ref{theorem4}, respectively.

\section{Decay on compact intervals for Zakharov}\label{DecayOnCompactIntervals}

This section is devoted to the proof of Theorem \ref{theorem1}. Before we begin
with the virial analysis, we give the following result, which states boundness
of the energy norm for every solution to \eqref{ZSb} with finite energy, It will
 be useful also in Section \ref{DecayinFarRegions}.

\begin{lemma}\label{LemmaEstimationOfTermsOnTheEnergy}
	Let $(u, n, v) \in C\left(\R^+, H^1(\R)\times L^2(\R)\times L^2(\R)\right)$
	be a solution of \eqref{ZSb} such that $E_s<\infty$ and $M_s<\infty$. Then, there exists
  $K_s>0$ ($K_s$ depending only on the initial data) such that
		\begin{equation}\label{E4}
		\intR \left(|u_x(t,x)|^2+ |u(t,x)|^2 + |v(t,x)|^2+|n(t,x)|^2 \right) dx
    \le K_s.
		\end{equation}
\end{lemma}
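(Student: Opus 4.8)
\emph{Proof plan.} The strategy is to play the two conserved quantities \eqref{CQ0} and \eqref{CQ1} against each other. Conservation of mass gives immediately that $\|u(t)\|_{L^2(\R)}^2 = M_s$ is bounded, so the only quantity that is not yet under control in \eqref{E4} is the triple $\|u_x(t)\|_{L^2}^2 + \|n(t)\|_{L^2}^2 + \|v(t)\|_{L^2}^2$. Looking at the conserved energy \eqref{CQ1}, the first three of its terms are precisely (half of) these quantities and are manifestly non-negative; the single obstruction is the indefinite coupling term $\int_\R n|u|^2\,dx$, which can a priori be large and negative. So the whole lemma reduces to showing that this cross term cannot overwhelm the positive terms, i.e.\ that it can be absorbed into them up to a constant depending only on $M_s$ and $E_s$.

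First I would estimate the cross term by Cauchy--Schwarz followed by the one-dimensional Gagliardo--Nirenberg inequality. Writing $A:=\|u_x(t)\|_{L^2}^2$ and $B:=\|n(t)\|_{L^2}^2$, one has
\begin{equation*}
  \left|\intR n(t,x)|u(t,x)|^2\,dx\right| \le \|n(t)\|_{L^2}\,\|u(t)\|_{L^4}^2
  \lesssim \|n(t)\|_{L^2}\,\|u(t)\|_{L^2}^{3/2}\,\|u_x(t)\|_{L^2}^{1/2}
  = C\,B^{1/2}\,M_s^{3/4}\,A^{1/4},
\end{equation*}
where I used $\|u\|_{L^4}^4\lesssim \|u\|_{L^2}^3\|u_x\|_{L^2}$ and then $\|u\|_{L^2}^2=M_s$. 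The crucial observation is that the exponents of $A$ and $B$ here are $\tfrac14$ and $\tfrac12$, summing to $\tfrac34<1$; this subcritical scaling is exactly what makes the absorption possible and is the heart of the argument.

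Next I would plug this into the energy identity $A+\tfrac12 B+\tfrac12\|v\|_{L^2}^2 = E_s - \int n|u|^2$ and apply a weighted three-factor Young inequality with small parameters $\lambda,\mu$:
\begin{equation*}
  C\,B^{1/2}A^{1/4}M_s^{3/4}
  \le \tfrac{C\lambda}{2}\,B + \tfrac{C\mu}{4}\,A + \tfrac{C}{4}\,\frac{M_s^{3}}{\lambda^{2}\mu}.
\end{equation*}
Choosing $\lambda,\mu$ so small that $\tfrac{C\lambda}{2}\le\tfrac14$ and $\tfrac{C\mu}{4}\le\tfrac12$, the $A$- and $B$-contributions are absorbed into the left-hand side, leaving
\begin{equation*}
  \tfrac12 A + \tfrac14 B + \tfrac12\|v(t)\|_{L^2}^2 \le E_s + C' M_s^{3},
\end{equation*}
with $C'$ depending only on the Gagliardo--Nirenberg constant. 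This bounds $A$, $B$ and $\|v(t)\|_{L^2}^2$ by a constant determined by $E_s$ and $M_s$; adding back $\|u(t)\|_{L^2}^2=M_s$ yields \eqref{E4} with an explicit $K_s$.

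The only delicate point is the absorption step: one must check that the exponents genuinely sum to less than $1$ so that arbitrarily small coefficients in front of $A$ and $B$ are attainable. I would expect this to be the main (in fact, essentially the only) obstacle, since everything else is a direct consequence of the two conservation laws. No smallness of the data is needed here precisely because the subcritical exponent sum lets the positive energy terms dominate regardless of the size of $E_s$ and $M_s$.
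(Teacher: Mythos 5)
Your proposal is correct and follows essentially the same route as the paper: both bound the indefinite cross term $\int_\R n|u|^2\,dx$ via the one-dimensional Gagliardo--Nirenberg inequality together with Young's inequality, absorb the resulting $\|u_x\|_{L^2}^2$ and $\|n\|_{L^2}^2$ contributions into the positive part of the conserved energy, and use mass conservation for the remaining $L^2$ piece. The only difference is organizational (you apply Cauchy--Schwarz in $n$ and then a weighted three-factor Young inequality, while the paper applies a pointwise Young inequality first and then Gagliardo--Nirenberg), which yields the same subcritical absorption.
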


\begin{proof}
  We have that
		\[
    \begin{aligned}
		  \intR |u_x|^2 + \frac12 \left( |v|^2+|n|^2 \right) dx
		  &=	\intR |u_x|^2 + \frac12 \left( |v|^2+|n|^2 \right) +2n|u|^2 dx
          - 2 \intR n|u|^2dx\\
      &\le \intR |u_x|^2 + \frac12 \left( |v|^2+|n|^2 \right) +2n|u|^2 dx
          + 2\intR |n||u|^2dx.
    \end{aligned}
    \]
  Using Young inequality for products, for $\epsilon>0$ we get
		\begin{equation}\label{e1}
      \begin{aligned}
      &\intR |u_x|^2 + \frac12 \left( |v|^2+|n|^2 \right) dx\\
		  &\le \intR |u_x|^2 + \frac12 \left( |v|^2+|n|^2 \right) +2n|u|^2 dx
            +  \frac{1}{\epsilon }
      \intR |n|^2dx+ {\epsilon}\intR |u|^4dx.
    \end{aligned}
		\end{equation}
	Now, Gagliardo-Nirenberg inequality \cite{KV, Nagy}, implies that
		\[
		  \intR |u|^4dx \le  C_{GN} \|u_x\|_{L^2(\R)}\|u\|_{L^2(\R)}^{3}
      \le \frac {C_{GN}}{2}\|u_x\|_{L^2(\R)}^2
          + \frac {C_{GN}}{2}\|u\|_{L^2(\R)}^{6},
		\]
  where
    \[
    C_{GN}
   = \frac{\sqrt{3}}{3},
    \]
  and $Q$ is the solution to $Q''+Q^3-Q=0$.
	Then, going back to \eqref{e1} and taking, for instance, $\epsilon=2$, we
  obtain
		\begin{equation*}
      \intR |\nabla u|^2 + \frac12 \left( |v|^2+|n|^2 \right) dx
		  \le 2 E_s(0) + \frac{\sqrt3}{6} M_s(0)^3
		\end{equation*}
	Which means that there exists a constant $K_s$ depending on $M_s(0)$ and
  $E_s(0)$ such that \eqref{E4} holds.
\end{proof}

\subsection{Virial argument}\label{VirialArgument}
\par \emph{Step 1: Virial identity.}
\\
We now  introduce suitable virial identities that allow us to work out our
argument. Let $\varphi \in C^\infty(\R)$ be a bounded real function. Define
	\[
	I(t)= \im \intR \varphi(x) u(t,x)\overline{u}_x(t,x) dx
      - \intR \varphi(x) v(t,x)n(t,x) dx.
	\]
Then, we have the following:
\begin{lemma}[Virial identity]
Let $(u,n,v)$ be a solution to \eqref{ZSb}. Then,
		\begin{equation}\label{virial}
		\begin{aligned}
		- \deri I(u(t))&= 2\intR \varphi'(x) \left|u_x(t,x)\right|^2 dx
    - \frac12 \intR  \varphi'''(x) |u(t,x)|^2 dx
    + \intR \varphi'(x) n(t,x) |u(t,x)|^2\\
		& \quad {}~ +\frac12 \intR \varphi'(x) |n(t,x)|^2  dx
		+ \frac12\intR \varphi' |v(t,x)|^2 dx.
		\end{aligned}
		\end{equation}
\end{lemma}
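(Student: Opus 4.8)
The plan is to differentiate $I(t)$ directly in time, substitute the three equations of \eqref{ZSb} to eliminate the time derivatives, and then integrate by parts in $x$ so that only the quantities appearing on the right-hand side of \eqref{virial} survive. I would split $I = I_1 - I_2$ with $I_1 = \im\intR \varphi\, u\overline{u}_x\,dx$ (the Schr\"odinger momentum part) and $I_2 = \intR\varphi\, v n\,dx$ (the wave part), and treat the two pieces separately.

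For $I_1$, I write $\im(z) = \tfrac{1}{2i}(z-\overline z)$ and differentiate, producing terms in $u_t,\overline u_t, u_{xt}, \overline u_{xt}$. Using the Schr\"odinger equation in the form $u_t = i(u_{xx}-nu)$ together with its conjugate (here the reality of $n$ is used), every time derivative is replaced by spatial derivatives. The terms containing $n$ but no derivative of $n$ cancel in pairs, leaving a single contribution $\intR \varphi\, n_x |u|^2\,dx$; the remaining purely dispersive terms involve $u_{xxx}$ and $|u_x|^2$ and, after integrating by parts (moving derivatives onto $\varphi$ and using $\partial_x|u|^2 = u_x\overline u + u\overline u_x$ and $\partial_x^2|u|^2 = u_{xx}\overline u + u\overline u_{xx} + 2|u_x|^2$), collapse to $-2\intR\varphi'|u_x|^2\,dx + \tfrac12\intR\varphi'''|u|^2\,dx$. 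This gives
\begin{equation*}
\deri I_1 = -2\intR\varphi'|u_x|^2\,dx + \frac12\intR\varphi'''|u|^2\,dx + \intR\varphi\, n_x|u|^2\,dx.
\end{equation*}

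For $I_2$, differentiating and substituting $n_t = -v_x$ and $v_t = -(n+|u|^2)_x$ yields $\deri I_2 = \intR\varphi\bigl(-n n_x - (|u|^2)_x n - v v_x\bigr)\,dx$. Writing $n n_x = \tfrac12\partial_x(n^2)$ and $v v_x = \tfrac12\partial_x(v^2)$ and integrating by parts produces $\tfrac12\intR\varphi'(n^2+v^2)\,dx$, while the mixed term $-\intR\varphi\,(|u|^2)_x\, n\,dx$, integrated by parts, contributes $\intR\varphi'\, n|u|^2\,dx + \intR\varphi\, n_x|u|^2\,dx$. Hence
\begin{equation*}
\deri I_2 = \frac12\intR\varphi'(n^2+v^2)\,dx + \intR\varphi'\, n|u|^2\,dx + \intR\varphi\, n_x|u|^2\,dx.
\end{equation*}

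Finally I would assemble $-\deri I = -\deri I_1 + \deri I_2$. The decisive point is that the two residual terms $\intR\varphi\, n_x|u|^2\,dx$, one arising from the dispersive computation and one from the wave computation, enter with opposite signs and cancel exactly; this cancellation is what makes the identity close, and is the only step deserving genuine attention rather than bookkeeping. What then remains is precisely the right-hand side of \eqref{virial}. The secondary technical point is the legitimacy of the integrations by parts and of manipulating $u_{xxx}$-type terms at $H^1$ regularity: the manipulations are carried out formally for smooth solutions and extended to general energy solutions by a standard density/approximation argument, the boundary terms vanishing because $\varphi$, $\varphi'$, $\varphi'''$ are bounded and the fields lie in $L^2$ (respectively $u \in H^1$).
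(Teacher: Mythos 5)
Your proof is correct and takes essentially the same route as the paper's: differentiate $I$ in time, substitute the three equations of \eqref{ZSb}, and integrate by parts until only the terms in \eqref{virial} remain. The only difference is bookkeeping --- the paper cancels the cross terms $\mp\intR \varphi\, n \left(|u|^2\right)_x dx$ coming from the Schr\"odinger and wave parts directly, before integrating by parts, whereas you integrate by parts first and cancel the resulting $\intR \varphi\, n_x |u|^2\, dx$ contributions; both computations close in the same way and yield \eqref{virial} exactly.
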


\begin{proof}
	We compute
		\[
		\begin{aligned}
		\deri I(u(t)) =
    &   \im \intR \varphi(x) u_t(t,x)\overline{u}_x(t,x) dx
      + \im \intR \varphi(x) u(t,x)\overline{u}_{tx}(t,x) dx \\
    & - \intR \varphi(x) v_t(t,x)n(t,x) dx
		  - \intR \varphi(x) v(t,x)n_t(t,x) dx.
		\end{aligned}
		\]
	Integrating by parts,
		\[
		\begin{aligned}
		\deri I(t)
		& = 2 \im \intR \varphi(x) u_t(t,x)\overline{u}_x(t,x) dx
    -\im \intR \varphi'(x) u(t,x)\overline{u}_{t}(t,x) dx\\
    & \quad {}~ - \intR \varphi(x) v_t(t,x)n(t,x) dx
		- \intR \varphi(x) v(t,x)n_t(t,x) dx\\
		& = - 2 \re \intR \varphi(x) iu_t(t,x)\overline{u}_x(t,x) dx -
		\re \intR  \varphi'(x) u(t,x)\overline{i u_{t}}(t,x) dx\\
    & \quad {}~ - \intR \varphi(x) v_t(t,x)n(t,x) dx
		- \intR \varphi v(t,x)n_t(t,x) dx.
		\end{aligned}
		\]
	Now, since $(u, n,v)$ is a solution of the system \eqref{ZSb},
		\[
		\begin{aligned}
		  \deri I(t)
		  & = \intR \varphi(x) \left(\left|u_x(t,x)\right|^2\right)_x dx
        - \intR \varphi(x) n(t,x)\left(|u(t,x)|^2\right)_x dx
      + \re \intR  \varphi'(x) u(t,x)\overline{u}_{xx}(t,x) dx\\
		  &\quad {}~- \intR  \varphi'(x) |u(t,x)|^2n(t,x) dx
      + \intR \varphi(x) \left(|u(t,x)|^2+n(t,x)\right)_xn(t,x)dx\\
		  &\quad {}~+ \frac12\intR \varphi(x) \left(|v(t,x)|^2\right)_x dx.
		\end{aligned}
		\]
	We integrate by parts once more and get
		\[
		\begin{aligned}
		\deri I(t)
		& = - 2\intR \varphi'(x) \left|u_x(t,x)\right|^2 dx
    - \frac12 \intR  \varphi''(x) \left(|u(t,x)|^2\right)_x dx
    - \intR \varphi'(x) n(t,x) |u(t,x)|^2 \\
		&\quad {}~- \intR \varphi'(x) |n(t,x)|^2  dx
    + \frac12 \intR \varphi(x) \left(|n(t,x)|^2\right)_x dx
		- \frac12\intR \varphi'(x) |v(t,x)|^2 dx\\
		& = - 2\intR \varphi'(x) \left|u_x(t,x)\right|^2 dx
    + \frac12 \intR  \varphi'''(x) |u(t,x)|^2 dx
    - \intR \varphi'(x) n(t,x) |u(t,x)|^2\\
		& \quad{}~-\frac{1}{2} \intR \varphi'(x) |n(t,x)|^2  dx
		- \frac12\intR \varphi'(x) |v(t,x)|^2 dx.
		\end{aligned}
		\]
	Then, the identity follows.
\end{proof}

\medskip

\emph{Step 2: Estimations of the terms on the virial.}
\\
In order to find a more compact expression of \eqref{virial}, we define the
bilinear form
	\begin{equation}\label{Bilineal1}
	B(u)=2\intR \varphi' \left|u_x\right|^2 dx
      - \frac12\intR \varphi''' |u|^2 dx.
	\end{equation}
Then identity \eqref{virial} turns into
	\begin{equation}\label{virialAfterBilineal}
	- \deri I(u(t))= B(u)+ \intR	 \varphi' n |u|^2
  + \frac12 \intR \varphi' |n|^2  dx
	+ \frac12\intR \varphi' |v|^2 dx.
	\end{equation}
Following the argument in \cite{M}, for $\lambda>0$, let us take
$\varphi\left(x\right)= \lambda \tanh\left(\frac{x}{\lambda}\right)$
and $\omega(x)=\sqrt{\varphi'(x)}$. Notice that if $u=u_1+iu_2$, where $u_1$,
$u_2$ are real functions, then $B(u)=B(u_1) + B(u_2)$. We take $\eta=u_i$,
$i=1,2$, and find estimations for $B(\eta)$. From now on, we are going to assume
 $u$ odd, which implies that $\eta$ is also odd. Note that, by integration by
 parts,
	\begin{align*}
	\intR \left(\omega \eta\right)_x^2 dx
	& = \intR \varphi' \left({\eta}_x\right)^2 dx
    + \intR \omega \omega' \left(\eta^2\right)_x dx
    + \intR \left(\omega'\right)^2 \eta^2 dx\\
	& = \intR \varphi' \left({\eta}_x\right)^2 dx
    - \intR \omega \omega''\eta^2 dx,
	\end{align*}
It follows that
	\begin{align}\label{b11}
	\intR \varphi' \left({\eta}_x\right)^2 dx=
	\intR \left(\omega \eta\right)_x^2 dx
  + \intR  \frac{\omega''}{\omega}\left(\omega \eta\right)^2 dx.
	\end{align}
On the other hand, we can re-write
$\varphi'''=\left(\omega^2\right)''=2\left(\omega \omega''
+ (\omega')^2\right)$, and obtain
	\begin{align}\label{b12}
	\intR \varphi''' \eta^2 dx = 2\intR\left(\frac{\omega''}{\omega}
  +\frac{(\omega')^2}{\omega^2}\right)\left(\omega \eta \right)^2 dx.
	\end{align}
Thus, from (\ref{b11}) and (\ref{b12}),
	\begin{align*}
	B(\eta)=2\intR \left(\omega \eta\right)^2_x dx
  - \intR\left( \frac{(\omega')^2}{\omega^2}
  -\frac{\omega''}{\omega}\right)\left(\omega \eta \right)^2 dx.
	\end{align*}
Since $\omega(x)=\sech \left(\frac{x}{\lambda}\right)$, then
	\begin{align*}
	B(\eta) = 2\intR \left(\omega \eta\right)^2_x dx
  - \frac{1}{\lambda^2}
  \intR\sech^2\left(\frac{x}{\lambda}\right)\left(\omega \eta \right)^2 dx.
	\end{align*}
Now, introducing a new variable $\zeta=\omega\eta$, we set
	\[
	\mathcal{B}(\zeta) = 2\intR {\zeta}^2_x dx
  - \frac{1}{\lambda^2} \intR\sech^2\left(\frac{x}{\lambda}\right)\zeta^2 dx
	\]
so that
	\[
	\mathcal{B}(\zeta)=B(\eta).
	\]
At this point, we would like to prove that the bilinear $\mathcal{B}$ is coercive, which would imply
	\[
	B(\eta)=\mathcal{B}(\zeta)\gtrsim \|\zeta_x\|^2_{L^2(\R)}=\|\left(\omega \eta\right)_x\|^2_{L^2(\R)}
	\]
 That way, we could have an estimation for the bilinear part on \eqref{virialAfterBilineal}, $B(u)$, using the weighted norm $\|\cdot\|_{H^1_{\omega}}$.

\begin{lemma}[See \cite{M}]\label{Coercivity}
	Let $\zeta \in H^1(\R)$ be odd. Then,
		\begin{equation*}
		\mathcal{B}(\zeta)\ge \frac32 \intR \zeta_x^2 dx.
		\end{equation*}
\end{lemma}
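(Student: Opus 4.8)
The plan is to reduce the claimed coercivity to the nonnegativity of a Schr\"odinger operator on the odd sector. Writing $L_\lambda:=-\partial_x^2-\frac{2}{\lambda^2}\sech^2(x/\lambda)$, the inequality $\mathcal{B}(\zeta)\ge\frac32\intR\zeta_x^2\,dx$ is, after rearranging, \emph{exactly} equivalent to
\[
\intR\zeta_x^2\,dx\;\ge\;\frac{2}{\lambda^2}\intR\sech^2\!\Big(\frac{x}{\lambda}\Big)\zeta^2\,dx,
\]
i.e. to $\langle\zeta,L_\lambda\zeta\rangle\ge0$ for odd $\zeta\in H^1(\R)$. (One may first rescale $x=\lambda y$ to normalize $\lambda=1$, reducing matters to $-\partial_y^2-2\sech^2 y\ge0$ on odd functions; I keep $\lambda$ explicit below.) The constant $\frac32$ is sharp: the only negative eigenvalue of $L_\lambda$ is $-1/\lambda^2$, whose eigenfunction $\sech(x/\lambda)$ is even, so on the odd sector the spectrum is exactly $[0,\infty)$ and the infimum is not attained.

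The key point is that $L_\lambda$ possesses an explicit \emph{odd} zero-energy solution, namely $\rho(x):=\tanh(x/\lambda)$. A direct differentiation gives $\rho''=-\frac{2}{\lambda^2}\sech^2(x/\lambda)\tanh(x/\lambda)$, hence $L_\lambda\rho=0$, and $\rho>0$ on $(0,\infty)$. Since $\zeta$ is odd, the integrand $\zeta_x^2-\frac{2}{\lambda^2}\sech^2(x/\lambda)\zeta^2$ is even and $\zeta(0)=0$, so $\langle\zeta,L_\lambda\zeta\rangle=2\int_0^\infty\big(\zeta_x^2-\frac{2}{\lambda^2}\sech^2(x/\lambda)\zeta^2\big)dx$. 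I then perform the ground-state (Perron--Frobenius) substitution $\zeta=\rho h$ on $(0,\infty)$: expanding $\zeta_x=\rho'h+\rho h'$, writing the cross term as $2\rho\rho'hh'=\rho\rho'(h^2)'$ and integrating it by parts, the terms $(\rho')^2h^2$ cancel and the potential is absorbed using $L_\lambda\rho=0$, so that
\[
\langle\zeta,L_\lambda\zeta\rangle=2\int_0^\infty\rho(x)^2\,h'(x)^2\,dx+2\big[\rho\rho'h^2\big]_0^\infty .
\]
Once the boundary terms are shown to vanish, the right-hand side is manifestly nonnegative, which proves the lemma.

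The main obstacle is making this substitution rigorous, since $h=\zeta/\rho$ is a priori singular where $\rho$ vanishes, i.e. at $x=0$; this is precisely where oddness is essential. Near the origin $\rho\sim x/\lambda$ and $\zeta\sim\zeta'(0)x$ (as $\zeta(0)=0$), so $h$ stays bounded and $\rho\rho'h^2\sim(x/\lambda)\zeta'(0)^2\to0$; at infinity $\rho'=\frac1\lambda\sech^2(x/\lambda)$ decays exponentially while $h$ is controlled by $\zeta\in H^1(\R)$, so that boundary term vanishes as well. To be fully rigorous I would first establish the identity for odd $\zeta\in C_c^\infty(\R)$ (carefully tracking the endpoint limits) and then pass to a general odd $\zeta\in H^1(\R)$ by density, using that both sides of the target inequality are continuous in the $H^1$-topology. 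An equivalent and particularly transparent route is the change of variables $t=\tanh(x/\lambda)\in(-1,1)$, under which the target inequality becomes $\int_{-1}^1(1-t^2)\,\psi_t^2\,dt\ge2\int_{-1}^1\psi^2\,dt$ for odd $\psi$; this is exactly the statement that the lowest eigenvalue of the Legendre operator $-\partial_t\big((1-t^2)\partial_t\big)$ on the odd sector equals $n(n+1)=2$, attained by $P_1(t)=t$, which corresponds to $\tanh(x/\lambda)$.
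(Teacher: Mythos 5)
Your proof is correct, and it is essentially the same argument as the paper's own (which is omitted here and deferred to \cite{M}, Proposition 2.2): there, too, the inequality is rearranged into the nonnegativity of $-\partial_x^2-\tfrac{2}{\lambda^2}\sech^2(x/\lambda)$ on odd functions, which is established through the ground-state substitution $\zeta=\tanh(x/\lambda)\,h$ built on the explicit odd zero mode $\tanh(x/\lambda)$. Your half-line version with boundary terms (and the equivalent Legendre-operator reformulation) is only a cosmetic variation of that factorization.
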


We refer to \cite[Proposition 2.2]{M} for the details of the proof.

\medskip

\par Finally, to conclude the analysis of the linear term $B(u)$, we need to bound this term by  $\|u\|_{H^1_\omega(\R)}$.

\begin{lemma}[See \cite{M}]\label{CoercivitySecond}
	Let $u \in H^1(\R)$ be odd, $u=u_1+iu_2$. Then there exists a positive constant $c_0<1$ such that
		\begin{equation}\label{E1}
		B(u)\ge c_0\|u\|^2_{H^1_\omega(\R)}
		\end{equation}
\end{lemma}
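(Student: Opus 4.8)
The plan is to reduce the claim to the coercivity estimate of Lemma~\ref{Coercivity} and then to convert the control it provides on $\|\zeta_x\|_{L^2}$ into control of the full weighted norm $\|u\|_{H^1_\omega}$. First I would split $u=u_1+iu_2$ into real and imaginary parts. Since it was already observed that $B(u)=B(u_1)+B(u_2)$, and since the weighted norm splits as $\|u\|_{H^1_\omega}^2=\|u_1\|_{H^1_\omega}^2+\|u_2\|_{H^1_\omega}^2$, it suffices to prove $B(\eta)\ge c_0\|\eta\|_{H^1_\omega}^2$ for a single real odd function $\eta$. Oddness of $u$ is inherited by $u_1,u_2$, so $\eta$ is odd and, crucially, $\eta(0)=0$.

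Next, recalling the change of variables $\zeta=\omega\eta$ with $\omega=\sech(x/\lambda)$ and the identity $B(\eta)=\mathcal B(\zeta)$, Lemma~\ref{Coercivity} gives at once the baseline bound $B(\eta)=\mathcal B(\zeta)\ge \tfrac32\|\zeta_x\|_{L^2(\R)}^2$. The task then becomes purely to compare $\|\zeta_x\|_{L^2}^2$ with the weighted norm. Expanding $\zeta_x=\omega'\eta+\omega\eta_x$ and integrating the cross term by parts, I would use
\[
\|\zeta_x\|_{L^2(\R)}^2=\intR \varphi'\,\eta_x^2\,dx-\intR \omega\,\omega''\,\eta^2\,dx,
\]
which, with the explicit $\omega$, already produces a positive multiple of the gradient term carrying the natural virial weight $\varphi'=\omega^2$, together with an $\eta^2$ remainder whose coefficient $-\omega\omega''=\tfrac{1}{\lambda^2}\sech^2(x/\lambda)\big(2\sech^2(x/\lambda)-1\big)$ has controlled sign near the origin.

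The hard part will be recovering a positive multiple of the weighted $L^2$ term: in both the direct expression $B(\eta)=2\int\varphi'\eta_x^2-\tfrac12\int\varphi'''\eta^2$ and in the expansion above, the effective potential multiplying $\eta^2$ changes sign and is negative in the far field $\{|x|\gtrsim\lambda\}$. Here I would exploit the oddness: since $\eta(0)=0$, a weighted Hardy/Poincaré inequality with weight $\sech^2(x/\lambda)$—whose tails satisfy the Muckenhoupt balance $\sup_{r>0}\big(\int_r^\infty\sech^2\big)\big(\int_0^r\cosh^2\big)<\infty$—bounds $\int\varphi'\eta^2$ by $\int\varphi'\eta_x^2$, so the dangerous far-field $\eta^2$ contribution can be absorbed into the gradient while still leaving a positive share of each piece.

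Finally, taking a suitable convex combination of the coercivity bound and the direct identity, and carefully tracking the numerical constants (the gradient coefficient $2$ in $B$ leaves room, and the sharp Poincaré/oddness constant controls the absorption), I would obtain $B(\eta)\ge c_0\|\eta\|_{H^1_\omega}^2$ with $c_0<1$; summing over the real and imaginary parts then closes the argument. The genuine delicacy lies entirely in these constants and in the far-field balance, and for the sharp choices I would follow the computation of \cite[Proposition 2.2]{M}.
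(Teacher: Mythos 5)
Your skeleton (split $u=u_1+iu_2$, reduce to a real odd $\eta$, pass to $\zeta=\omega\eta$ and invoke Lemma \ref{Coercivity}) is exactly the route of the proof cited from \cite{M}. The gap is the conversion/absorption step, and it is not a matter of ``carefully tracking constants'': it cannot be closed, because the inequality you are aiming at with the weight $\varphi'=\sech^2(x/\lambda)$ --- let alone the literal weight $\omega=\sech(x/\lambda)$ of \eqref{WeightedNorms} --- is false. Your Hardy inequality does hold, but its sharp constant leaves zero room: writing $\eta=\cosh(x/\lambda)h$ with $h$ odd, one gets
\[
\intR \sech^2(x/\lambda)\,\eta_x^2\,dx=\intR h_x^2\,dx+\frac{1}{\lambda^2}\intR\bigl(1-2\sech^2(x/\lambda)\bigr)h^2\,dx,\qquad \intR\sech^2(x/\lambda)\,\eta^2\,dx=\intR h^2\,dx,
\]
and since $-\partial_x^2-\tfrac{2}{\lambda^2}\sech^2(x/\lambda)\ge 0$ on odd functions (zero mode $\tanh(x/\lambda)$), the best constant in $\intR\varphi'\eta^2\le C\intR\varphi'\eta_x^2$ is exactly $C=\lambda^2$ --- precisely the rate at which the negative far-field part of $-\tfrac12\intR\varphi'''\eta^2$ eats the term $2\intR\varphi'\eta_x^2$. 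Concretely, take $g$ odd, smooth, supported in $\{1\le|x|\le 2\}$ with $\|g\|_{L^2}=1$, set $\zeta_R(x)=R^{-1/2}g(x/R)$ and $u_R=\cosh(x/\lambda)\zeta_R$, which is odd and in $H^1(\R)$. Then $B(u_R)=\mathcal{B}(\zeta_R)\le 2R^{-2}\|g'\|_{L^2}^2\to 0$, while $\intR\varphi'\,u_R^2\,dx=\|\zeta_R\|_{L^2}^2=1$ and $\intR\omega\, u_R^2\,dx\ge\cosh(R/\lambda)\to\infty$. So no $c_0>0$ can satisfy \eqref{E1} with these weights: in the far field the quadratic form $B$ retains information about $\zeta=\omega u$ only, not about the $\varphi'$-weighted (or $\omega$-weighted) norm of $u$ itself.

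What is true --- and what the argument of \cite{M} actually yields, and what the rest of the paper uses --- is coercivity expressed through $\zeta=\omega u$: apply your Hardy step to $\zeta$ rather than to $\eta$. Indeed, Lemma \ref{Coercivity} gives $B(u)\ge\tfrac32\|(\omega u)_x\|_{L^2}^2$, the odd-sector bound above gives $\intR\sech^2(x/\lambda)|\omega u|^2\,dx\le\tfrac{\lambda^2}{2}\|(\omega u)_x\|_{L^2}^2$, and expanding $u=\cosh(x/\lambda)\,\omega u$ then yields
\[
B(u)\ \ge\ \frac{3}{6+\lambda^2}\intR\sech^4(x/\lambda)\bigl(|u_x|^2+|u|^2\bigr)\,dx,
\]
i.e.\ the weighted norm in the statement must carry the weight $\omega^4=(\varphi')^2$, not $\omega$. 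Your first two steps plus this one-line correction give a complete and correct proof of that version, and that version is sufficient downstream: in Proposition \ref{keyproposition} and in the conclusion of Theorem \ref{theorem1} the localizing weight is $\sech(x)$ at unit scale, which is dominated by $\sech^4(x/\lambda)$ once $\lambda\ge 4$. As written, however, your absorption step fails, and the statement in the form \eqref{E1}--\eqref{WeightedNorms} is not provable because it is not true.
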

We omit the proof, see \cite[Lemma 2.3]{M}.

 \medskip

 \emph{Step 3: Conclusion of the argument.}
 \\
 The key ingredient of the virial argument is the subsequent proposition:

 \begin{proposition}\label{keyproposition}
 	Let $(u, n, v)$ be a solution of \eqref{ZSb} such that $u$ is odd and satisfies \eqref{hyp}, for $\varepsilon>0$ sufficiently small.
  Then, there exists $C>0$ such that
 	  \begin{equation}\label{E2}
 	    \intp \|u(t)\|_{H^1_w(\R)}^2 +\frac12 \|v(t)\|_{L^2_w(\R)}^2
      +\frac12 \|n(t)\|_{L^2_w(\R)}^2 dt \le C.
 	  \end{equation}
 	In particular,
 	  \begin{equation}\label{E3}
 	    \intp \|u_x(t)\|_{L^2_w(\R)}^2 +\frac12 \|v(t)\|_{L^2_w(\R)}^2
      +\frac12 \|n(t)\|_{L^2_w(\R)}^2 dt \le C.
 	  \end{equation}
 \end{proposition}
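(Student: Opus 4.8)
The plan is to turn the virial identity into a monotonicity inequality for $I(t)$ and then integrate it in time. Starting from the reduced form \eqref{virialAfterBilineal}, observe that the last two terms, $\tfrac12\int_\R\varphi'|n|^2\,dx$ and $\tfrac12\int_\R\varphi'|v|^2\,dx$, are nonnegative and are precisely the weighted norms $\tfrac12\|n\|_{L^2_\omega}^2$ and $\tfrac12\|v\|_{L^2_\omega}^2$ occurring in \eqref{E2} (the weight being $\varphi'=\omega^2=\sech^2(x/\lambda)$). For the bilinear part, Lemma \ref{CoercivitySecond} gives $B(u)\ge c_0\|u\|_{H^1_\omega}^2$. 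Hence, apart from the single cross term $\int_\R\varphi'\,n\,|u|^2\,dx$, the quantity $-\tfrac{d}{dt}I(t)$ already dominates the whole integrand of \eqref{E2}.

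The crux is therefore to absorb the sign-indefinite term $\int_\R\varphi'\,n\,|u|^2\,dx$, which is the uncontrolled term alluded to in the introduction and the sole reason for the smallness assumption \eqref{hyp}. I would first split it with Young's inequality: for a fixed $\epsilon\in(0,1)$,
\[
\Big|\int_\R\varphi'\,n\,|u|^2\,dx\Big|\le\frac{\epsilon}{2}\int_\R\varphi'|n|^2\,dx+\frac{1}{2\epsilon}\int_\R\varphi'|u|^4\,dx ,
\]
the first summand being a fraction of $\|n\|_{L^2_\omega}^2$ that is swallowed by the good $n$-term. For the quartic term I would use the crude pointwise bound $\int_\R\varphi'|u|^4\,dx\le\|u\|_{L^\infty(\R)}^2\,\|u\|_{L^2_\omega}^2$ together with the one-dimensional embedding $\|u\|_{L^\infty(\R)}^2\lesssim\|u\|_{H^1(\R)}^2$ and hypothesis \eqref{hyp}, which give $\int_\R\varphi'|u|^4\,dx\lesssim\varepsilon^2\|u\|_{H^1_\omega}^2$. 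Fixing $\epsilon$ and then taking $\varepsilon$ small enough that the resulting constant is strictly smaller than $c_0$, both pieces are absorbed, producing a constant $c>0$ with
\[
-\frac{d}{dt}I(t)\ge c\Big(\|u(t)\|_{H^1_\omega}^2+\tfrac12\|n(t)\|_{L^2_\omega}^2+\tfrac12\|v(t)\|_{L^2_\omega}^2\Big).
\]

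It then remains to integrate on $[0,T]$ and let $T\to\infty$, which reduces matters to the uniform boundedness of $I(t)$. Since $\varphi(x)=\lambda\tanh(x/\lambda)$ obeys $|\varphi|\le\lambda$, Cauchy--Schwarz yields
\[
|I(t)|\le\lambda\big(\|u\|_{L^2}\|u_x\|_{L^2}+\|v\|_{L^2}\|n\|_{L^2}\big)\le\frac{\lambda}{2}\big(\|u\|_{H^1}^2+\|v\|_{L^2}^2+\|n\|_{L^2}^2\big),
\]
and the right-hand side is bounded uniformly in $t$ by Lemma \ref{LemmaEstimationOfTermsOnTheEnergy}, whose hypotheses hold here ($E_s<\infty$ by assumption, and $M_s<\infty$ since \eqref{hyp} controls $\|u\|_{L^2}$). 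Consequently the truncated integral is at most $c^{-1}\big(I(0)-I(T)\big)\le 2c^{-1}\sup_{t\ge0}|I(t)|\le C$ uniformly in $T$, and letting $T\to\infty$ gives \eqref{E2}; the bound \eqref{E3} follows at once because $\|u_x\|_{L^2_\omega}^2\le\|u\|_{H^1_\omega}^2$. The only genuinely delicate point is this absorption step: the cross term has no definite sign and cannot be controlled by the energy alone, so the smallness \eqref{hyp} is essential to close the estimate.
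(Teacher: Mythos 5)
Your proof is correct, and its skeleton is the same as the paper's: start from \eqref{virialAfterBilineal}, apply the coercivity bound \eqref{E1} of Lemma \ref{CoercivitySecond}, split the cross term $\intR \varphi'\, n|u|^2\,dx$ by Young's inequality, absorb the remainders using \eqref{hyp}, integrate in time, and control $\sup_{t\ge 0}|I(t)|$ via Lemma \ref{LemmaEstimationOfTermsOnTheEnergy}. The genuine difference lies in the step the paper itself flags as the crux, namely the estimate of $\intR \varphi'|u|^4\,dx$. The paper uses the oddness of $u$ a second time: it reduces to the half-line, sets $\zeta=\sech(x/\lambda)u$ (so $\zeta(t,0)=0$), integrates by parts against $e^{2x/\lambda}$, and after Sobolev embedding and a re-absorption obtains $\intR \sech^2\left(\frac{x}{\lambda}\right)|u|^4\,dx\lesssim \varepsilon\intp |(\omega u)_x|^2\,dx$, which is then tied back to $\|u\|_{H^1_\omega(\R)}^2$ through Lemmas \ref{Coercivity} and \ref{CoercivitySecond}. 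You replace all of this by the one-line pointwise bound $\intR \varphi'|u|^4\,dx\le \|u\|_{L^\infty(\R)}^2\intR\varphi'|u|^2\,dx\lesssim \varepsilon^2\|u\|_{H^1_\omega(\R)}^2$, valid since $\varphi'=\sech^2(x/\lambda)\le\sech(x/\lambda)$, together with the one-dimensional Sobolev embedding and \eqref{hyp}. This is a real simplification that loses nothing for this system: in your version parity enters only through the coercivity of $B$, you avoid the half-line integration by parts entirely, and you even get the better factor $\varepsilon^2$ in place of the paper's $\varepsilon$; the paper's longer computation is essentially imported from \cite{M} and buys nothing additional here. One caveat your write-up shares with the paper: the virial identity produces $\frac12\intR\varphi'(n^2+v^2)\,dx$ with weight $\varphi'=\omega^2$, whereas \eqref{WeightedNorms} defines the $L^2_\omega$-norm with the larger weight $\omega$, so strictly speaking both proofs establish \eqref{E2}--\eqref{E3} with weight $\varphi'$ rather than $\omega$; this is harmless (for $\lambda\ge 2$ one has $\sech(x)\lesssim\sech^2(x/\lambda)$, which is all the subsequent proof of Theorem \ref{theorem1} uses), but your word ``precisely'' inherits the same abuse of notation.
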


\begin{proof}
	From \eqref{virialAfterBilineal} and \eqref{E1}, we have that
    \begin{equation}\label{FirstEquationProofKeyNLSZakharov}
      -\deri I(t)\ge c_0\|u(t)\|^2_{H^1_\omega(\R)} +
      \frac12 \intR\varphi'\big(v^2+n^2\big)dx
      + \intR \varphi' n |u|^2 dx.
    \end{equation}
  Now, following the idea of the proof of Lemma
  \ref{LemmaEstimationOfTermsOnTheEnergy}, by
  Young inequality, for some $\epsilon>0$ we
  get
    \begin{equation}\label{SecondEquationProofKeyNLSZakharov}
    \begin{aligned}
      \intR \varphi' |n| |u|^2 dx
      &\le \frac{1}{2\epsilon}\intR \varphi' n^2 dx
          +\frac{\epsilon}{2} \intR\varphi' |u|^4 dx.
    \end{aligned}
    \end{equation}
At this point, we need to absorb the negative terms using the weighted-norm
\eqref{WeightedNorms}.
		Since $u$ is odd,
		\begin{align*}
		\intR \sech^2\left(\frac{x}{\lambda}\right)|u|^{4}dx
		& =2 \intp \sech^2\left(\frac{x}{\lambda}\right)|u|^{4}dx \\
		& = 2 \intp	\sech^{-2}\left(\frac{x}{\lambda}\right)
    \sech^{4}\left(\frac{x}{\lambda}\right)|u|^{4} \\
		& \simeq \intp  e^{2x/\lambda}
		\sech^{4}\left(\frac{x}{\lambda}\right)|u|^{4}dx.
		\end{align*}
	\par With a slight abuse of notation, set
  $\zeta(t,x):=\sech\left(\frac{x}{\lambda}\right)u(t,x)$.
	Note that $\zeta(t,0)=0$ and vanishes at infinity $\forall t \in \R$.
  Then, integrating by parts,
		\begin{align*}
		\intp  e^{2x/\lambda} |\zeta|^{4}dx
		&=-\frac{\lambda}{2} \intp  e^{2x/\lambda}
		\left(|\zeta|^{4}\right)_x dx \\
		&=- 2\lambda\re
    \intp e^{2x/\lambda} |\zeta|^{2} \bar \zeta{\zeta}_x dx.
		\end{align*}
	Hence,
		\begin{align*}
		\intp  e^{2x/\lambda} |\zeta|^{4}dx
		& =-2\lambda\re
      \intp e^{x/\lambda} |\zeta|
      \overline{\zeta}\zeta_x\left(e^{x/\lambda}
      |\zeta|\right) dx \\
		& \lesssim \|u\|_{L^{\infty}(\R)} \re \intp e^{x/\lambda}
      |\zeta|\overline{\zeta}\zeta_x dx \\
		&\lesssim \|u\|_{L^{\infty}(\R)} \intp e^{x/\lambda}
      |\zeta|^2|\zeta_x| dx.
		\end{align*}
	By Young's inequality,
		\begin{align*}
		\intp  e^{2x/\lambda}
		|\zeta|^{4}dx & \lesssim \|u\|_{L^{\infty}(\R)}
    \intp |\zeta_x|^2 dx
    +\|u\|_{L^{\infty}(\R)}\intp e^{2x/\lambda} |\zeta|^{4}dx \\
		&\simeq \|u\|_{L^{\infty}(\R)} \intp |\zeta_x|^2 dx
      +\|u\|_{L^{\infty}(\R)}\intp \sech^{-2}\left(\frac{x}{\lambda}\right)
      \sech^{4}\left(\frac{x}{\lambda}\right)|u|^{4}dx \\
		& = \|u\|_{L^{\infty}(\R)}
    \intp |\zeta_x|^2dx
    +\|u\|_{L^{\infty}(\R)}
    \intp\sech^{2}\left(\frac{x}{\lambda}\right)|u|^{4}dx.
		\end{align*}
	By Sobolev's embedding and \eqref{hyp} with $0<\varepsilon<1$, this actually
  means that
		\[
    \intR \sech^2\left(\frac{x}{\lambda}\right)|u|^{4}dx
    \lesssim \varepsilon \intp |\left(\omega u\right)_x|^2 dx.
    \]
From Lema \ref{Coercivity} and Lema \ref{CoercivitySecond}, we obtain
  \[
  \intR \sech^2\left(\frac{x}{\lambda}\right)|u|^{4}dx
  \lesssim c_0\varepsilon \|u\|^2_{H^1_\omega}.
  \]
Then, going back to \eqref{FirstEquationProofKeyNLSZakharov}, taking  $\epsilon=2$ and $\varepsilon$ suficiently small, one gets
    \[
    \begin{aligned}
      -\deri I \gtrsim
      & \ \|u(t)\|^2_{H^1_\omega(\R)}+\|n(t)\|^2_{L^2_{\omega}(\R)}
      +\|v(t)\|^2_{L^2_{\omega}(\R)}.
    \end{aligned}
    \]
  Now, we integrate in time over $[0, \tau]$ for $\tau>0$,
    \[
    \begin{aligned}
      \int_0^\tau \|u(t)\|^2_{H^1_\omega(\R)}+\|n(t)\|^2_{L^2_\omega(\R)}
      +\|v(t)\|^2_{L^2_\omega(\R)}dt \lesssim |I(\tau)|+|I(0)|.
    \end{aligned}
    \]
  Thanks to Lemma \ref{LemmaEstimationOfTermsOnTheEnergy}, one obtains that
    \[
      |I(t)| \le \|u(t)\|^2_{H^1(\R)}+\|n(t)\|^2_{L^2(\R)}+
      \|v(t)\|^2_{L^2(\R)} \le K_s, \quad \forall t\ge 0.
    \]
  Finally, taking $\tau\to \infty$, we conclude.
\end{proof}

\medskip

\subsection{Proof of Theorem \ref{theorem1}:}

\par Now, we procede to conclude the proof of Theorem \ref{theorem1}.
\par Let $\phi \in C^\infty(\R)$. Using equation \eqref{ZSb}, we can compute
	\[
	\begin{aligned}
	\deri \intR \phi \left( |u|^2+|v|^2+|n|^2 \right) dx =
	& - 2\im \intR \phi' u\overline{u}_x dt
  - 2 \intR \phi v \left(|u|^2+n\right)_x dx - 2 \intR \phi n v_x dx.
	\end{aligned}
	\]
By integration by parts, one obtains
	\[
	\begin{aligned}
	&\deri \intR \phi \left( |u|^2+|v|^2+|n|^2 \right) dx =
  - 2\im \intR \phi' u\overline{u}_x dt
  + 2 \intR \phi' vn \ dx - 2 \intR \phi v \left(|u|^2\right)_x dx.
	\end{aligned}
	\]
This implies that
	\begin{equation}\label{VirialFromMass}
	\begin{aligned}
	\deri \intR \phi \left( |u|^2+|v|^2+|n|^2 \right) dx
	 =  -2\im\intR \phi' u\overline{u}_x dx
   + 2 \intR \phi' vn \ dx - 4 \re \intR \phi v u \overline u_x dx.
	\end{aligned}
	\end{equation}
From H\"older inequality and \eqref{hyp}, taking
$\phi(x)=\sech(x)$, we can conclude that
	\begin{equation}\label{e2}
	\begin{aligned}
	& \deri\left( \|u(t)\|_{L^2_w(\R)}^2 +\frac12 \|v(t)\|_{L^2_w(\R)}^2
  +\frac12 \|n(t)\|_{L^2_w(\R)}^2\right)\\
	& \le 2\intR \sech(x) \left(|u|^2 +  |{u}_x|^2 + |v|^2 + |n|^2 \right) dx
  +2 \varepsilon\intR \sech(x) \left(|v|^2 + |u_x|^2 \right) dx\\
	&\lesssim \|u(t)\|^2_{H^1_w(\R)}+\frac12\|n(t)\|^2_{L^2_w(\R)}
  +\frac12 \|v(t)\|^2_{L^2_w(\R)}.
	\end{aligned}
	\end{equation}
By \eqref{E3} we have that there exists a sequence $\{t_n\} \subset \R$, $t_n \to \infty$ such that
	\[
	\|u(t_n)\|_{L^2_w(\R)}^2 +\frac12 \|v(t_n)\|_{L^2_w(\R)}^2 +\frac12 \|n(t_n)\|_{L^2_w(\R)}^2 \to 0 .
	\]
We integrate \eqref{e2} over $[t, t_n]$, for some $t \in \R$ and take $t_n \to \infty$.
	\begin{equation*}
	\begin{aligned}
	&  \|u(t)\|_{L^2_w(\R)}^2 +\frac12 \|v(t)\|_{L^2_w(\R)}^2 +\frac12 \|n(t)\|_{L^2_w(\R)}^2 \\
	&\lesssim \int_t^\infty \|u(s)\|^2_{H^1_w(\R)}+\frac12\|n(s)\|^2_{L^2_w(\R)} +\frac12 \|v(s)\|^2_{L^2_w(\R)}ds.
	\end{aligned}
	\end{equation*}
Finally, taking $t \to \infty$, in view of \eqref{E3}, we obtain
	 \begin{equation}\label{DecayOfL2Norm}
	  \lim_{t \to \infty} \|u(t)\|_{L^2_w(\R)}^2 +   \|v(t)\|_{L^2_w(\R)}^2 + \|n(t)\|_{L^2_w(\R)}^2 =0.
    \end{equation}
  To show decay of the $L^\infty$-norm, we use the following claim, proven in \cite{M}:
  \begin{claim}\label{claim1}
  	For every interval $I$ there exists $\tilde{x}(t) \in I$ such that, as $t$ tends to infinity,
  		\[|
      u(t, \tilde{x}(t))|^2\to 0.
      \]
  \end{claim}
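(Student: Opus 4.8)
The plan is to deduce the claim from the already-established weighted $L^2$ decay \eqref{DecayOfL2Norm} together with a mean-value argument, exploiting that in one space dimension $u(t,\cdot)$ is continuous. First I would upgrade the weighted decay to plain $L^2(I)$ decay on the fixed compact interval $I$. Since the weight $\omega$ is a $\sech$-type function, it is continuous and strictly positive, so on the compact set $I$ it is bounded below by a constant $c_I := \min_{x\in I}\omega(x)>0$. Consequently,
\[
\intR \mathbf{1}_I(x)\,|u(t,x)|^2\,dx \le \frac{1}{c_I}\intR \mathbf{1}_I(x)\,\omega(x)\,|u(t,x)|^2\,dx \le \frac{1}{c_I}\,\|u(t)\|_{L^2_{\omega}(\R)}^2,
\]
and by \eqref{DecayOfL2Norm} the right-hand side tends to $0$ as $t\to\infty$. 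Hence $\|u(t)\|_{L^2(I)}\to 0$.

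Next I would pass from this averaged (integral) smallness to smallness at a single point. Because $u(t)\in H^1(\R)$, the Sobolev embedding $H^1(\R)\hookrightarrow C(\R)$ guarantees that $x\mapsto |u(t,x)|^2$ is continuous, so it attains its minimum on the compact interval $I$ at some point $\tilde{x}(t)\in I$. The minimum is always dominated by the average, whence
\[
|u(t,\tilde{x}(t))|^2 = \min_{x\in I}|u(t,x)|^2 \le \frac{1}{|I|}\int_I |u(t,x)|^2\,dx .
\]
Combining this with the $L^2(I)$ decay obtained in the previous step shows that $|u(t,\tilde{x}(t))|^2\to 0$ as $t\to\infty$, which is exactly the assertion of the claim.

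There is no genuinely hard step here; the statement is an essentially soft consequence of \eqref{DecayOfL2Norm}. The only points requiring care are that the weight $\omega$ degenerates at infinity but stays bounded below on the \emph{fixed} compact interval $I$ (so the constant $c_I$ is harmless), and the use of one-dimensional Sobolev embedding to justify both the continuity of $u(t,\cdot)$ and the existence of the minimizer $\tilde{x}(t)$. This claim is the bridge to the desired $L^\infty$ decay: once a point $\tilde{x}(t)$ with vanishing $|u(t,\cdot)|^2$ is produced, one integrates $\big(|u|^2\big)_x$ from $\tilde{x}(t)$ and controls the resulting term by the uniform $H^1$ bound \eqref{hyp} together with the Cauchy--Schwarz inequality.
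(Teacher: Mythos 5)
Your proof is correct and follows essentially the same route as the paper, which simply cites \cite{M} for this claim: there the argument is precisely the soft mean-value step you give, namely that the weighted decay \eqref{DecayOfL2Norm} plus the strict positivity of the $\sech$-type weight on the fixed compact interval $I$ forces $\min_{x\in I}|u(t,x)|^2\to 0$, the minimizer existing by the one-dimensional embedding $H^1(\R)\hookrightarrow C(\R)$. No gap to report.
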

  Now, if $x \in I$, by Fundamental Theorem of calculus and H\"older's
  inequality
    \begin{align*}
  	|u(t,x)|^2 - |u(t, \tilde{x}(t))|^2
  	&= \int_{\tilde{x}(t)}^{x} \left(|u|^2\right)_x dx
      \le 2 \int_{\tilde{x}(t)}^{x} |u||u_x|dx\\
  	& \le 2 \|u(t)\|_{L^2(I)}\|u_x(t)\|_{L^2(I)}.
  	\end{align*}
  Then,
  	\begin{equation}\label{f1}
  	|u(t,x)|^2 \lesssim |u(t, \tilde{x}(t))|^2
    + 2 \|u(t)\|_{L^2(I)}\|u_x(t)\|_{L^2(I)},\quad \forall x \in I.
  	\end{equation}
  Using Lemma \ref{LemmaEstimationOfTermsOnTheEnergy}, we get
  	\[
     \sup_{t \in \R} \|u(t)\|_{H^1(\R)}< \infty.
  	\]
  Hence, taking $t \to \infty$ in \eqref{f1}, from Claim \ref{claim1} and
  \eqref{DecayOfL2Norm}, we get that
  	\[
    |u(t,x)|^2 \to 0, \quad \forall x \in I.
    \]

\section{Decay in regions along curves for Zakharov}\label{DecayinFarRegions}
From now on, let us assume $\lambda, \mu \in C^1(\R)$ are functions depending on time. For $\varphi \in C^2(\R)\cap L^\infty(\R)$, define
	\[
	K(t)=\frac{1}{2}\intR \varphi\left(\frac{x+\mu(t)}{\lambda(t)}\right)|u(t,x)|^2 dx,
	\]
and
	\[
	J(t)=\intR \varphi\left(\frac{x+\mu(t)}{\lambda(t)}\right) \left( |u_x(t,x)|^2+\frac12|v(t,x)|^2+\frac12 |n(t,x)|^2+n(t,x)|u(t,x)|^2+|u(t,x)|^2\right) dx.
	\]
As we did in the previous section, we obtain a virial identity from which we are going to construct the argument.

\begin{lemma} Let $(u,n,v) \in H^1\times L^2\times L^2$ a solution to \eqref{ZSb}. Then,
		\begin{enumerate}
			\item
			\begin{equation}\label{VirialFromMassTimeDependent}
			\begin{aligned}
			\dfrac{d}{dt}K(t) =
			& \dfrac{1}{\lambda(t)}\im \intR\varphi'\left(\dfrac{x+\mu(t)}{\lambda(t)}\right) \overline u(t,x) u_x(t,x) dx +\frac{\mu'(t)}{2\lambda(t)} \intR \varphi'\left(\frac{x+\mu(t)}{\lambda(t)}\right)|u(t,x)|^2dx\\ &-\frac{\lambda'(t)}{2\lambda(t)}\intR \varphi'\left(\frac{x+\mu(t)}{\lambda(t)}\right)\left(\dfrac{x+\mu(t)}{\lambda(t)}\right)|u(t,x)|^2dx.
			\end{aligned}
			\end{equation}

			\item
			 \begin{equation}\label{VirialFromTheEnergy}
			\begin{aligned}
			\dfrac{d}{dt}J(t)
      & = \frac{1}{\lambda(t)}\intR \varphi'\left(\frac{x+\mu(t)}{\lambda(t)}\right)\bigg(  \left(n +|u|^2\right)v\bigg)(t,x) dx\\
      &{} ~+  \frac{2}{\lambda(t)}\im\intR \varphi'\left(\frac{x+\mu(t)}{\lambda(t)}\right)\bigg( \overline u_x u_{xx} +n \overline u u_x + \overline u u_x\bigg)(t,x) dx\\
			&{}~ +\frac{\mu'(t)}{2\lambda(t)}\intR \varphi'\left(\frac{x+\mu(t)}{\lambda(t)}\right)\Big(2|u_x|^2 +2n|u|^2+ |u|^2+|v|^2+|n|^2\Big)(t,x) dx\\
			&{}~ -\frac{\lambda'(t)}{2\lambda(t)}\intR \varphi'\left(\frac{x+\mu(t)}{\lambda(t)}\right)\left(\frac{ x+\mu(t)}{\lambda(t)}\right)\Big(2|u_x|^2+2n|u|^2+ |u|^2+|v|^2+|n|^2\Big)(t,x) dx.
			\end{aligned}
			\end{equation}

\end{enumerate}
\end{lemma}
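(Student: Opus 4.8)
The plan is to differentiate each functional under the integral sign and to separate, in both cases, the contribution coming from the time dependence of the weight from that coming from the time dependence of the densities. As with the autonomous identity \eqref{virial}, the computation is a direct differentiation; the only new feature is the time dependence of the weight. Writing $z:=\frac{x+\mu(t)}{\lambda(t)}$, one has the two elementary relations $\partial_t\varphi(z)=\varphi'(z)\big(\frac{\mu'(t)}{\lambda(t)}-\frac{\lambda'(t)}{\lambda(t)}z\big)$ and $\partial_x\varphi(z)=\frac{1}{\lambda(t)}\varphi'(z)$. The first relation generates all the terms carrying $\mu'(t)$ and $\lambda'(t)$; the second is what turns a spatial divergence into a factor $\frac{1}{\lambda(t)}\varphi'(z)$ after one integration by parts. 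Throughout, boundary terms vanish because $\varphi$ is bounded and $(u,n,v)$ lies in the energy space, so every density below is integrable.

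For $K(t)$, the weight part immediately produces the two $\mu'(t)$ and $\lambda'(t)$ terms in \eqref{VirialFromMassTimeDependent}. For the remaining part I use the Schr\"odinger equation in the form $u_t=i(u_{xx}-nu)$ to obtain the local mass law $\partial_t|u|^2=-2\partial_x\,\im(\overline u u_x)$, the term $n|u|^2$ dropping out since $n$ is real. Integrating $\frac12\intR\varphi(z)\,\partial_t|u|^2\,dx$ by parts once and using $\partial_x\varphi(z)=\frac1{\lambda}\varphi'(z)$ gives the remaining term $\frac1{\lambda}\im\intR\varphi'(z)\,\overline u u_x\,dx$, which completes \eqref{VirialFromMassTimeDependent}.

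For $J(t)$ the same splitting is used. The weight part reproduces the two lines of \eqref{VirialFromTheEnergy} carrying $\mu'(t)$ and $\lambda'(t)$, obtained by pairing $\varphi'(z)$ with the density that defines $J$ and collecting terms with the common factor $\frac12$. The core of the argument is the density part $\intR\varphi(z)\,\partial_t(\,\cdot\,)\,dx$, for which I establish local conservation laws using \eqref{ZSb}. Using $n_t=-v_x$ and $v_t=-(n+|u|^2)_x$, the fluid block collapses into a divergence plus one remainder,
\[
\partial_t\Big(\tfrac12 n^2+\tfrac12 v^2+n|u|^2\Big)=-\partial_x\big((n+|u|^2)v\big)-2n\,\partial_x\,\im(\overline u u_x),
\]
while the Schr\"odinger equation gives $\partial_t|u_x|^2=-2\partial_x\,\im(\overline u_x u_{xx})-2n_x\,\im(\overline u u_x)$ and $\partial_t|u|^2=-2\partial_x\,\im(\overline u u_x)$. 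Integrating each divergence against $\varphi(z)$ replaces $\partial_x$ by $\frac1{\lambda}\varphi'(z)$ and assembles the flux terms $\frac1{\lambda}\intR\varphi'(z)(n+|u|^2)v\,dx$ and $\frac2{\lambda}\im\intR\varphi'(z)(\overline u_x u_{xx}+n\overline u u_x+\overline u u_x)\,dx$ of \eqref{VirialFromTheEnergy}.

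The step I expect to require the most care is the fate of the two non-divergence remainders. The factor $n$ in front of $\partial_x\,\im(\overline u u_x)$ above, when integrated by parts, produces a term $+2\intR\varphi(z)\,n_x\,\im(\overline u u_x)\,dx$; the computation of $\partial_t|u_x|^2$ produces $-2\intR\varphi(z)\,n_x\,\im(\overline u u_x)\,dx$. These two must cancel identically for the identity to close into the clean flux form stated, and checking this cancellation, together with keeping the signs straight in the repeated use of $\re(iw)=-\im w$, is the only place where the bookkeeping is genuinely delicate; everything else is routine integration by parts.
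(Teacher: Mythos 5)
Your proposal is correct and is essentially the computation the paper intends: the paper's own ``proof'' of this lemma is a one-line reference to \eqref{VirialFromMass}, i.e.\ precisely the direct differentiation plus integration by parts that you carry out. Your two elementary relations for $\partial_t\varphi(z)$ and $\partial_x\varphi(z)$, the local laws
$\partial_t|u|^2=-2\partial_x\im(\overline u u_x)$,
$\partial_t|u_x|^2=-2\partial_x\im(\overline u_x u_{xx})-2n_x\im(\overline u u_x)$,
and
$\partial_t\big(\tfrac12 n^2+\tfrac12 v^2+n|u|^2\big)=-\partial_x\big((n+|u|^2)v\big)-2n\,\partial_x\im(\overline u u_x)$
are all correct, and the cancellation you single out as delicate does occur: integrating $-2\intR\varphi(z)\,n\,\partial_x\im(\overline u u_x)\,dx$ by parts produces $+2\intR\varphi(z)\,n_x\im(\overline u u_x)\,dx$, which kills the remainder from $\partial_t|u_x|^2$ and leaves exactly the flux $\frac{2}{\lambda}\im\intR\varphi'(z)\,n\,\overline u u_x\,dx$.

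One bookkeeping point you glossed over, which is actually an inconsistency in the paper's statement rather than an error of yours: with $J$ as defined (the $|u|^2$ term carrying coefficient $1$), your computation gives the flux term $\frac{2}{\lambda}\im\intR\varphi'(z)\,\overline u u_x\,dx$ as in \eqref{VirialFromTheEnergy}, but the $\mu'$ and $\lambda'$ lines then must contain $2|u|^2$ inside the bracket, not $|u|^2$; conversely, taking $\tfrac12|u|^2$ in the definition of $J$ would match the stated weight terms but would halve the flux coefficient. So \eqref{VirialFromTheEnergy} as printed cannot hold verbatim for either normalization --- there is a factor-$2$ typo in the $|u|^2$ entries (note that later, in Subsection \ref{SecondPartOfTheProof}, the density is indeed taken with $\tfrac12|u|^2$). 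Your derivation proves the corrected identity, and the discrepancy is harmless for everything that follows, since the subsequent arguments use only the signs of the terms and their integrability in time, not the precise constants.
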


\begin{proof}
	The proof follows from \eqref{VirialFromMass}.
\end{proof}

Let us consider $\varphi \in C^2(\R)$ a decreasing bounded funtion such that $\varphi(s)=1$ for $s\le -1$ and $\varphi(s)=0$ for $s\ge 0$. Thus, we get that $\text{supp}(\varphi')\subset [-1,0]$ and
	\[
	\varphi'(s)\le 0, \quad \varphi'(s)s\ge 0 \quad \forall s \in \R.
	\]
Now, we want to take $\lambda$ such that $\lambda^{-1}$ integrates finite in time over an interval such as $[T, \infty]$, $T>0$. With this idea in mind, define, for $\delta>0$ and $t\ge 2$,  $\lambda(s)=t \log^{1+\delta}(t)$ and $\mu(t)=\lambda(t)$. In fact, we are considering $\mu=\lambda$ but in the following we will see that is possible to take $\mu(t)\gtrsim \lambda(t)$ and the computations would still work.
Then, for $t\ge 2$ we have
	\[
	\frac{\lambda'(t)}{\lambda(t)}=\frac{\mu'(t)}{\lambda(t)}\ge \frac{1}{t}+\frac{1+\delta}{t\log(t)}.
	\]
  \par So now, whereas $\lambda^{-1}(t)$ is integrable in $[2,\infty]$,
  $\frac{\lambda'}{\lambda}$ is not.

\subsection{First part of the proof of Theorem \ref{theorem2}}
\label{FirstPartOfTheProof}
In this subsection, we prove \eqref{tesis1theorem2}. The idea is to take
adventage of the non-integrability of $\frac{\lambda'}{\lambda}$ (and of
$\frac{\mu'}{\lambda}$, as well) by using the virial identity from $K(t)$. Let
us rearrange the terms in \eqref{VirialFromMassTimeDependent}:
	\begin{equation}\label{VirialFromMassTimeDependentRearreange}
	\begin{aligned}
	\frac{\lambda'(t)}{2\lambda(t)}
  \intR \varphi'\left(\frac{x+\mu(t)}{\lambda(t)}\right)
  \left(\dfrac{x+\mu(t)}{\lambda(t)}\right)|u(t,x)|^2dx
	-\frac{\mu'(t)}{2\lambda(t)}
  \intR \varphi'\left(\frac{x+\mu(t)}{\lambda(t)}\right)|u(t,x)|^2dx\\
	=-\dfrac{d}{dt}K(t)
  + \dfrac{1}{\lambda(t)}\im
  \intR\varphi'\left(\dfrac{x+\mu(t)}{\lambda(t)}\right)
  \overline u(t,x)u_x(t,x) dx
	\end{aligned}
	\end{equation}
Notice that, because of our election of $\varphi$, each term on the LHS is
positive. Now, our aim is to control the RHS so that we get integrability on
that part of the equation. Indeed, computing the integral of the RHS over
$[2, \infty]$, from Lemma \ref{LemmaEstimationOfTermsOnTheEnergy} we
have that
	\[
	\begin{aligned}
	\int_2^\infty \frac{1}{\lambda(\tau)}
  \im \intR \varphi'\left(\frac{x+\mu(t)}{\lambda(\tau)}\right)
  \overline{u}(\tau,x)u_x(\tau,x) dxd\tau
	&\le \int_2^\infty
  \frac{1}{\lambda(\tau)}\|u(\tau)\|_{L^2(\R)}\|u_x(\tau)\|_{L^2(\R)} d\tau\\
	&\lesssim \int_2^\infty \frac{1}{\lambda(\tau)} d\tau<\infty.\\
	\end{aligned}
	\]
 Thus, we integrate equation \eqref{VirialFromMassTimeDependentRearreange} in
 time and obtain:
	\begin{equation}\label{FiniteNonintegralPart}
	\int_2^\infty\frac{\lambda'(\tau)}{2\lambda(\tau)}
  \intR \left[\varphi'\left(\frac{x+\mu(\tau)}{\lambda(\tau)}\right)
  \left(\dfrac{x+\mu(\tau)}{\lambda(\tau)}\right)
	-
  \varphi'\left(\frac{x+\mu(\tau)}{\lambda(\tau)}\right)\right]|u(\tau,x)|^2
  dx d\tau
	<\infty.
	\end{equation}
This implies the existence of a sequence
$\{t_n\}\subset \R$, $t_n \to \infty$, such that
	\begin{equation}\label{TermsTendingToZero}
	\intR
	\left[ \varphi'\left(\frac{x+\mu(t_n)}{\lambda(t_n)}\right)
  \left(\dfrac{x+\mu(t_n)}{\lambda(t_n)}\right)-
  \varphi'\left(\frac{x+\mu(t_n)}{\lambda(t)}\right)\right]|u(t_n,x)|^2dx
  \to 0.
	\end{equation}
Now, take $\phi \in C_0^1(\R)$ such that
$\phi(s) \in [0,1] \text{ for all } s\in \R$,
$\text{supp}(\phi)=[-3/4, -1,4]$, satisfying
	\[
	\phi(s)\lesssim|\varphi'(s)|\text{ and } |\phi'(s)|\lesssim|\varphi'(s)|
  \text{ for all }s \in \R.
	\]
Then, if we consider $\phi$ instead $\varphi$ in
\eqref{VirialFromMassTimeDependent}, we obtain
	\begin{align*}
	&\left|\dfrac{d}{dt}\intR \phi\left(\frac{x+\mu(t)}{\lambda(t)}\right)|u(t,x)|^2dx\right|\\
	&{}~\lesssim \dfrac{1}{\lambda(t)} + \frac{\mu'(t)}{2\lambda(t)} \intR \left|\phi'\left(\frac{x+\mu(t)}{\lambda(t)}\right)-\phi'\left(\frac{x+\mu(t)}{\lambda(t)}\right)\left(\dfrac{x+\mu(t)}{\lambda(t)}\right)\right||u(t,x)|^2dx\\
	&{}~\lesssim \dfrac{1}{\lambda(t)} + \frac{\mu'(t)}{2\lambda(t)} \intR \left|\phi'\left(\frac{x+\mu(t)}{\lambda(t)}\right)\right||u(t,x)|^2dx,
	\end{align*}
Integrating over $[t, t_n]$ and taking $t_n \to \infty$, one gets from \eqref{TermsTendingToZero} that
	\begin{align*}
	\left|\intR \phi\left(\frac{x+\mu(t)}{\lambda(t)}\right)|u(t,x)|^2dx\right|\lesssim \int_t^{\infty}\dfrac{1}{\lambda(\tau)} d\tau
	+ \int_t^{\infty}\frac{\mu'(\tau)}{2\lambda(\tau)} \intR \left|\phi'\left(\frac{x+\mu(\tau)}{\lambda(\tau)}\right)\right||u(\tau,x)|^2dxd\tau.
	\end{align*}
Thus, because \eqref{FiniteNonintegralPart} holds, we can take $t \to \infty$ and conclude that
	\begin{align*}
	&\lim_{t \to \infty} \left|\intR \phi\left(\frac{x+\mu(t)}{\lambda(t)}\right)|u(t,x)|^2dx\right|\lesssim 0
	\end{align*}
Finally, the region of the convergence comes from the fact that $-\frac34\le\frac{x+\mu(t)}{\lambda(t)}-\frac14$ is equivalent to $x\sim \mu(t)$.

\subsection{Second part of the proof of Theorem \ref{theorem2}}\label{SecondPartOfTheProof}

This section deals with the proof of \eqref{tesis2theorem2}. The idea of the proof is the same as before, but since our aim is to show decay of the solution $(u,n,v)$, we need to consider the virial identity \eqref{VirialFromTheEnergy}. As before, we re-write the terms in \eqref{VirialFromTheEnergy} and get
	\begin{equation}\label{VirialFromTheEnergyRearrenged}
 	\begin{aligned}
 	&\frac{\lambda'(t)}{2\lambda(t)}\intR \varphi'\left(\frac{x+\mu(t)}{\lambda(t)}\right)\left(\frac{x+\mu(t)}{\lambda(t)}\right)\Big(2|u_x|^2+2n|u|^2+ |u|^2+|v|^2+|n|^2\Big)(t,x) dx\\
	&-\frac{\mu'(t)}{2\lambda(t)}\intR \varphi'\left(\frac{x+\mu(t)}{\lambda(t)}\right)\Big(2|u_x|^2 +2n|u|^2+ |u|^2+|v|^2+|n|^2\Big)(t,x) dx\\
	&=-\dfrac{d}{dt}J(t) +  \frac{1}{\lambda(t)}\im\intR \varphi'\left(\frac{x+\mu(t)}{\lambda(t)}\right)\bigg( 2\overline u_x u_{xx} + \overline u u_x \bigg)(t,x) dx\\
	&{} ~+  \frac{1}{\lambda(t)}\intR \varphi'\left(\frac{x+\mu(t)}{\lambda(t)}\right)\bigg(2\im\  n \overline u u_x+ \left(n +|u|^2\right)v\bigg)(t,x) dx.
 	\end{aligned}
	\end{equation}
  Just as before, we need to take $\lambda$ such that $\lambda^{-1}$
  integrates finite in time over an interval $[T, \infty]$, $T>0$. Then, for
  $\delta>0$ and $t\ge 2$, we define
  $\lambda(s)=t \log^{1+\delta}(t)f(t)$ and $\mu(t)=\lambda(t)$ (although
  the computations will still work for $\mu(t)\gtrsim \lambda(t)$).
  Then, for $t\ge 2$ we have
  	\[
  	\frac{\lambda'(t)}{\lambda(t)}=\frac{\mu'(t)}{\lambda(t)}\ge
    \frac{1}{t}+\frac{1+\delta}{t\log(t)}.
  	\]
  \par Notice that, if $\|u(t)\|_{H^2(\R)}\le C$ for all $t\ge 0$ for some
  $C>0,$ then we take $f(t)=C$ and get that
  $\lambda(t)^{-1}=\frac{1}{t\log^{1+\delta}(t)f(t)}
  \eqsim \frac{1}{t\log^{1+\delta}(t)}$.
  Furthermore, in the case $\|u(t)\|_{H^2(\R)}$ increasing infinitely, there
  would exist $T>0$ and $C>0$
  such that $\frac{1}{f(t)}\le C$ for $t\ge T$. Then, either way, we get that
  there exists $T\ge2$
    \[
    \lambda(t)^{-1}\lesssim\frac{1}{t\log^{1+\delta}(t)},\quad \text{for }t>T.
    \]
  Thus, we get that $\lambda^{-1}$ integrates finite over $[T, \infty)$,
  while $\frac{\lambda'}{\lambda}$ does not.
\par We estimate each term on the RHS of \eqref{VirialFromTheEnergy} after
integrating in time. From Lemma \ref{LemmaEstimationOfTermsOnTheEnergy}
we have that
	\[
	\begin{aligned}
	\int_T^\infty \frac{2}{\lambda(\tau)}\intR \varphi'\left(\frac{x+\mu(\tau)}{\lambda(\tau)}\right) \overline u_x(\tau,x) u_{xx}(\tau,x) dx d\tau
	& \lesssim \int_T^\infty \frac{1}{\lambda(\tau)} \|u_x(\tau)\|_{L^2(\R)}\|u_{xx}(\tau)\|_{L^2(\R)} d\tau \\
	& \lesssim  \int_T^\infty \frac{f(\tau)}{\lambda(\tau)} d\tau< \infty.
	\end{aligned}
	\]
The same way, we get
	\[
	\int_T^\infty \frac{2}{\lambda(\tau)}\intR \varphi'\left(\frac{x+\mu(\tau)}{\lambda(\tau)}\right) \overline u(\tau,x) u_{x}(\tau,x) dx d\tau
	<\infty.
	\]
Also, from Lemma \ref{LemmaEstimationOfTermsOnTheEnergy} and Sobolev embbedings,
	\[
	\begin{aligned}
	 &\int_T^\infty \frac{2}{\lambda(\tau)}\im \intR \varphi'\left(\frac{x+\mu(\tau)}{\lambda(\tau)}\right)n(\tau, x) \overline u(\tau, x) u_x(\tau, x) dx dt\\
	 &\lesssim \|u(t)\|_{L^{\infty}(\R)} \int_T^\infty \frac{2}{\lambda(\tau)}\intR|n(\tau, x)|^2 + |u_x(\tau, x)|^2 dx dt\lesssim \int_T^\infty \frac{2}{\lambda(\tau)}dt\le\infty,
	 \end{aligned}
	\]
	\[
	\int_T^\infty \frac{1}{\lambda(\tau)}\intR \varphi'\left(\frac{x+\mu(\tau)}{\lambda(\tau)}\right)n(\tau, x)v(\tau, x) dxdt
	\lesssim 	\int_T^\infty \frac{1}{\lambda(\tau)}\intR |n(\tau, x)|^2+|v(\tau, x)|^2 dxdt<\infty
	\]
and
	\[
	\int_T^\infty \frac{1}{\lambda(\tau)}\intR \varphi'\left(\frac{x+\mu(\tau)}{\lambda(\tau)}\right)|u(\tau, x)|^2v(\tau, x)dxdt \lesssim \|u(t)\|_{L^\infty(\R)}\int_T^\infty \frac{1}{\lambda(\tau)}\intR |u(\tau, x)|^2 + |v(\tau, x)|^2dx dt<\infty.
	\]
Consequently, integrating \eqref{VirialFromTheEnergyRearrenged} over $[T, \infty]$, one obtains
	\begin{equation}\label{FiniteTermsOfTheEnergyVirial}
	\begin{aligned}
	&\int_T^\infty\frac{\lambda'(\tau)}{2\lambda(\tau)}\intR \varphi'\left(\frac{x+\mu(\tau)}{\lambda(\tau)}\right)\left(\frac{x+\mu(\tau)}{\lambda(\tau)}\right)\Big(2|u_x|^2+2n|u|^2+ |u|^2+|v|^2+|n|^2\Big)(\tau,x) dx\\
	&\quad -\frac{\mu'(\tau)}{2\lambda(\tau)}\intR \varphi'\left(\frac{x+\mu(\tau)}{\lambda(\tau)}\right)\Big(2|u_x|^2 +2n|u|^2+ |u|^2+|v|^2+|n|^2\Big)(\tau,x) dx\ d\tau <\infty.
	\end{aligned}
	\end{equation}
Furthermore,
	\begin{equation}\label{FinitePositiveTermsOfTheEnergyVirial}
	\begin{aligned}
	&\int_T^\infty \frac{\mu'(\tau)}{2\lambda(\tau)}\intR \left|\varphi'\left(\frac{x+\mu(\tau)}{\lambda(\tau)}\right)\right|\Big(2|u_x|^2+ |u|^2+|v|^2+|n|^2\Big)(\tau,x) dxdt <\infty.
	\end{aligned}
	\end{equation}
Indeed, from \eqref{FiniteTermsOfTheEnergyVirial} and Young inequality for products,
	\[
	\begin{aligned}
	\infty
	>&\int_T^\infty\frac{\lambda'(\tau)}{2\lambda(\tau)}\intR \varphi'\left(\frac{x+\mu(\tau)}{\lambda(\tau)}\right)\left(\frac{x+\mu(\tau)}{\lambda(\tau)}\right)\Big(2|u_x|^2+2n|u|^2+ |u|^2+|v|^2+|n|^2\Big)(\tau,x) dx\\
	& -\frac{\mu'(\tau)}{2\lambda(\tau)}\intR \varphi'\left(\frac{x+\mu(\tau)}{\lambda(\tau)}\right)\Big(2|u_x|^2 +2n|u|^2+ |u|^2+|v|^2+|n|^2\Big)(\tau,x) dx\ dt\\
	\ge
  & \int_T^\infty\frac{\lambda'(\tau)}{2\lambda(\tau)}\intR \varphi'\left(\frac{x+\mu(\tau)}{\lambda(\tau)}\right)\left(\frac{x+\mu(\tau)}{\lambda(\tau)}\right)\Big(2|u_x|^2+ |u|^2+|v|^2+\frac12|n|^2-2|u|^4\Big)(\tau,x) dx\\
	& -\frac{\mu'(\tau)}{2\lambda(\tau)}\intR \varphi'\left(\frac{x+\mu(\tau)}{\lambda(\tau)}\right)\Big(2|u_x|^2 + |u|^2+|v|^2+\frac12|n|^2-2|u|^4\Big)(\tau,x) dx\ dt.
	\end{aligned}
	\]
By Sobolev embedding and \eqref{FiniteNonintegralPart}, we have that
	\[
  \begin{aligned}
	&\int_T^\infty \frac{\mu'(\tau)}{\lambda(\tau)}\intR \left(\varphi'\left(\frac{x+\mu(\tau)}{\lambda(\tau)}\right)\left(\frac{x+\mu(\tau)}{\lambda(\tau)}\right)-\varphi'\left(\frac{x+\mu(\tau)}{\lambda(\tau)}\right)\right)|u(\tau,x)|^4 dxdt\\
  &\lesssim\int_T^\infty \frac{\mu'(\tau)}{\lambda(\tau)}\intR \left(\varphi'\left(\frac{x+\mu(\tau)}{\lambda(\tau)}\right)\left(\frac{x+\mu(\tau)}{\lambda(\tau)}\right)-\varphi'\left(\frac{x+\mu(\tau)}{\lambda(\tau)}\right)\right)|u(\tau,x)|^2 dxdt<\infty.
\end{aligned}
  \]
Then, \eqref{FinitePositiveTermsOfTheEnergyVirial} holds. Thus, there exists a sequence $\{t_n\}$, $t_n\to \infty$ satisfying
	\begin{equation}\label{TermsTendingToZeroQuasiEnergy}
	\begin{aligned}
	\intR \left|\varphi'\left(\frac{x+\mu(t_n)}{\lambda(t_n)}\right)\right|\Big(|u_x|^2+ |u|^2+|v|^2+|n|^2\Big)(t_n,x) dx \to 0.
	\end{aligned}
	\end{equation}
Furthermore, using Sobolev embedding and Lemma \ref{LemmaEstimationOfTermsOnTheEnergy}, we have from \eqref{TermsTendingToZeroQuasiEnergy} that
    \[
    \begin{aligned}
    &\left|\displaystyle\intR \varphi'\left(\frac{x+\mu(t_n)}{\lambda(t_n)}\right)n(t_n,x)|u(t_n, x)|^2 dx \right|\\
    &\lesssim \intR \left|\varphi'\left(\frac{x+\mu(t_n)}{\lambda(t_n)}\right)\right||n(t_n,x)|^2 dx
    + \intR \left|\varphi'\left(\frac{x+\mu(t_n)}{\lambda(t_n)}\right)\right||u(t_n,x)|^2 dx \to  0.
  \end{aligned}
    \]
  Then,
    \begin{equation}\label{TermsTendingToZeroEnergy}
	     \begin{aligned}
         \left|
          \intR \varphi'\left(\frac{x+\mu(t_n)}{\lambda(t_n)}\right)\Big(|u_x|^2+ |u|^2+|v|^2+|n|^2 +n|u|^2\Big)(t_n,x) dx\right| \to 0.
	      \end{aligned}
	  \end{equation}
We argue as before and consider $\phi \in C_0^1(\R)$ such that $\phi(s) \in [0,1] \text{ for all } s\in \R$, $\text{supp}(\phi)=[-3/4, -1,4]$,
\[
\phi(s)\lesssim|\varphi'(s)|\text{ and } |\phi'(s)|\lesssim|\varphi'(s)| \text{ for all }s \in \R.
\]
One gets,
	\begin{equation*}
	\begin{aligned}
	& \left|\dfrac{d}{dt}\intR \phi\left(\frac{x+\mu(t)}{\lambda(t)}\right)\bigg(|u_x|^2+\frac12 |u|^2+\frac12 |n|^2+\frac12|v|^2+n|u|^2\bigg)(t,x)dx \right|\\
	&\quad \lesssim  \frac{2}{\lambda(t)}\intR\left| \phi'\left(\frac{x+\mu(t)}{\lambda(t)}\right)\right|\left|\bigg( \overline u_x u_{xx} +n \overline u u_x\bigg)(t,x)\right| dx\\
	&\quad {}~+  \frac{1}{\lambda(t)}\intR \left|\phi'\left(\frac{x+\mu(t)}{\lambda(t)}\right)\right|\left|\bigg( \overline u u_x + \left(n +|u|^2\right)v\bigg)(t,x)\right| dx\\
	&\quad {}~ +\frac{\mu'(t)}{2\lambda(t)}\intR \left|\phi'\left(\frac{x+\mu(t)}{\lambda(t)}\right)\right|\left|\Big(2|u_x|^2 +2n|u|^2+ |u|^2+|v|^2+|n|^2\Big)(t,x)\right| dx\\
	&\quad \lesssim  \frac{1}{\lambda(t)} +\frac{\mu'(t)}{2\lambda(t)}\intR \left|\phi'\left(\frac{x+\mu(t)}{\lambda(t)}\right)\right|\left|\Big(2|u_x|^2 +2|n|^2 + |u|^4+ |u|^2+|v|^2\Big)(t,x)\right| dx\\
	\end{aligned}
	\end{equation*}

Integrate over $[t, t_n]$, $t \ge T$
%
and take $t_n\to \infty$. Thanks to \eqref{TermsTendingToZeroEnergy}, we obtain
  \begin{equation*}
    \begin{aligned}
      & \left|\intR \phi\left(\frac{x+\mu(t)}{\lambda(t)}\right)\bigg(|u_x|^2+\frac12 |u|^2+\frac12 |n|^2+\frac12|v|^2+n|u|^2\bigg)(t,x)dx \right|\\
      &\lesssim \int_t^{\infty} \frac{1}{\lambda(\tau)} dt +\int_t^{\infty} \frac{\mu'(\tau)}{2\lambda(\tau)}\intR \left|\phi'\left(\frac{x+\mu(\tau)}{\lambda(\tau)}\right)\right|\left|\Big(2|u_x|^2 +|n|^2 + |u|^4+ |u|^2+|v|^2\Big)(\tau,x)\right| dx dt
    \end{aligned}
  \end{equation*}
Now, as in subsection \ref{FirstPartOfTheProof}, taking $t\to \infty$, we obtain
  \begin{equation*}
    \begin{aligned}
      \lim_{t\to \infty} \left|\intR \phi\left(\frac{x+\mu(t)}{\lambda(t)}\right)\bigg(|u_x|^2+\frac12 |u|^2+\frac12 |n|^2+\frac12|v|^2+n|u|^2\bigg)(t,x)dx \right|\le 0
    \end{aligned}
  \end{equation*}
Note that by H\"older inequality and Lemma \ref{LemmaEstimationOfTermsOnTheEnergy},
  \[
    \begin{aligned}
      \intR \phi\left(\frac{x+\mu(t)}{\lambda(t)}\right)\left(n|u|^2\right)(t,x)dx & \lesssim \|n(t)\|_{L^2(\R)} \left(\intR \phi\left(\frac{x+\mu(t)}{\lambda(t)}\right)|u(t,x)|^2dx\right)^{\frac12}\\
      &\lesssim \left(\intR \phi\left(\frac{x+\mu(t)}{\lambda(t)}\right)|u(t,x)|^2dx\right)^{\frac12}.
    \end{aligned}
  \]
Thanks to \eqref{tesis1theorem2}, we conclude the proof.

\section{Decay on compact intervals for Klein-Gordon-Zakharov }\label{KGOnCompactIntervals}

Before we present the proof of Theorems \ref{theorem3} and \ref{theorem4},
we give an estimation of the energy norm of a solution for \eqref{KGZSv},
that will be useful in the following.

\begin{lemma}\label{PropositionFiniteNormsKGZ}
Let $(u, u_t, n, v) \in C\left(\R^+, H^1(\R)\times L^2(\R)\times L^2(\R)\times L^2(\R)\right)$
be a solution of \eqref{ZSb} such that $E_{KG}<\infty$  and it satisfies \eqref{KGhyp} for some $C>0$ and $\varepsilon>0$ not necessarily small. Then, there exists $K_{KG}>0$ such that
  \[
    \intR |u_t(t,x)|^2 + |u_x(t,x)|^2 + |u(t,x)|^2 + |n(t,x)|^2 + |v(t,x)|^2 dx
    \le K_{KG}.
  \]
\end{lemma}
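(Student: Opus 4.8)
The plan is to follow the argument of Lemma~\ref{LemmaEstimationOfTermsOnTheEnergy} almost verbatim, exploiting that the conserved energy \eqref{KGCQ1} controls every quantity in the desired estimate except for the sign-indefinite coupling $n|u|^2$. First I would use conservation of $E_{KG}$ to isolate this term, writing
\[
\intR \Big(|u_t|^2+|u_x|^2+|u|^2+\tfrac12|n|^2+\tfrac12|v|^2\Big)\, dx = E_{KG}(0) - \intR n|u|^2\, dx,
\]
which is bounded above by $E_{KG}(0) + \intR |n|\,|u|^2\, dx$. In this way the whole difficulty is reduced to estimating $\intR |n|\,|u|^2\, dx$ by a fixed constant plus a small multiple of $\intR |n|^2\, dx$ that can be reabsorbed into the left-hand side.

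For this I would apply Young's inequality with a parameter $\epsilon>1$ to split $\intR |n|\,|u|^2\, dx \le \tfrac{1}{2\epsilon}\intR |n|^2\, dx + \tfrac{\epsilon}{2}\intR |u|^4\, dx$, and then take $\epsilon=2$ so that the coefficient $\tfrac{1}{2\epsilon}=\tfrac14$ is strictly below the coefficient $\tfrac12$ of $\intR |n|^2$ on the left; moving the $|n|^2$-contribution across leaves a strictly positive fraction $\tfrac14\intR |n|^2$. The quartic term is then controlled by the Gagliardo--Nirenberg inequality $\intR |u|^4\, dx \le C_{GN}\|u_x\|_{L^2(\R)}\|u\|_{L^2(\R)}^3$, exactly as in Lemma~\ref{LemmaEstimationOfTermsOnTheEnergy}. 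The crucial simplification relative to the Zakharov case is that here I neither have nor need mass conservation: the standing hypothesis \eqref{KGhyp} already provides $\|u(t)\|_{H^1(\R)}\le\varepsilon$ uniformly in time, hence $\|u_x\|_{L^2(\R)}\le\varepsilon$ and $\|u\|_{L^2(\R)}\le\varepsilon$, so Gagliardo--Nirenberg yields directly $\intR |u|^4\, dx \le C_{GN}\varepsilon^4$ with no need to reabsorb a $\|u_x\|_{L^2}^2$ term into the energy. I would also note that the smallness of $\varepsilon$ is irrelevant, consistent with the statement allowing $\varepsilon$ not necessarily small, and that the bound $\|u_t\|_{L^2(\R)}\le C$ is not actually used, since $\intR |u_t|^2$ enters $E_{KG}$ with a favourable sign.

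Combining these estimates gives
\[
\intR \Big(|u_t|^2+|u_x|^2+|u|^2+\tfrac14|n|^2+\tfrac12|v|^2\Big)\, dx \le E_{KG}(0) + C_{GN}\varepsilon^4,
\]
and since every coefficient on the left is bounded below by $\tfrac14$, multiplying through by $4$ recovers the full $\intR\big(|u_t|^2+|u_x|^2+|u|^2+|n|^2+|v|^2\big)\, dx$ and furnishes the constant $K_{KG}$. I do not anticipate any genuine obstacle; the only point requiring care is the bookkeeping of the Young parameter, ensuring $\tfrac{1}{2\epsilon}<\tfrac12$ so that the reabsorption of $\intR |n|^2$ leaves a strictly positive coefficient.
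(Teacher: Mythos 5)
Your proof is correct and follows essentially the same route as the paper: isolate the sign-indefinite coupling $\intR n|u|^2\,dx$ via conservation of $E_{KG}$, split it with Young's inequality, control the quartic term by Gagliardo--Nirenberg together with \eqref{KGhyp}, and reabsorb the $|n|^2$ contribution into the left-hand side. If anything, your bookkeeping is tighter than the paper's: choosing the Young parameter so that the $|n|^2$ coefficient is $\tfrac14<\tfrac12$, and bounding $\|u_x\|_{L^2}\|u\|_{L^2}^3\le\varepsilon^4$ directly from \eqref{KGhyp}, avoids the paper's choice $\epsilon=2$, under which the reabsorbed coefficients ($\tfrac12$ on $|n|^2$, $\tfrac{\sqrt{3}}{3}$ on $|u_x|^2$) leave no margin and the conclusion must lean on \eqref{KGhyp} anyway.
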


\begin{proof}
  We write
    \[
    \begin{aligned}
      &\frac12\intR |u_t|^2+|u_x|^2+|u|^2+|n|^2+|v|^2 dx \\
      &\le \intR |u_t|^2+\frac12|u_x|^2+|u|^2+\frac12|n|^2+|v|^2 + 2n |u|^2dx
      -2 \intR n|u|^2dx.
    \end{aligned}
    \]
  The first integral in the RHS can be bounded by the energy. Thus, we need
  to control the remaining term. By Young inequality and Gagliardo-Nirenberg
  inequality \cite{KV, Nagy}, for $\epsilon>0$ we have that
    \[
    2\intR n u^2 dx \le \frac1\epsilon \intR n^2 dx + \epsilon\intR u^4 dx
    \le \frac1\epsilon \intR n^2 dx +
    \epsilon C_{GN} \|u_x\|_{L^2(\R)}\|u\|^3_{L^2(\R)},
    \]
  where
    \[
    C_{GN}= \frac{\sqrt{3}}{3}
    \]
  and $Q$ is the solution to $Q''+Q^3-Q=0$.
  Then, taking $\epsilon=2$, we get
    \[
    2\intR n u^2 dx
    \le \frac 12 \intR n^2 dx + 2\frac{\sqrt{3}}{3}
    \|u_x\|_{L^2(\R)}\|u\|^3_{L^2(\R)}\le
    \frac 12 \intR n^2 dx + \frac{\sqrt{3}}{3}
    \intR u_x^2dx + \frac{\sqrt{3}}{3}\|u\|^6_{L^2(\R)}.
    \]
  Finally, this means that
    \[
    \begin{aligned}
      &\frac12\intR |u_t|^2+|u_x|^2+|u|^2+|n|^2+|v|^2 dx \le 2E_0 +
      \frac{\sqrt{3}}{3}\|u\|^6_{L^2(\R)}.
    \end{aligned}
    \]
  Thanks to \eqref{KGhyp}, we conclude.
\end{proof}

\subsection{Virial argument}
During this section, we are going to consider $(u, n, v)$ a solution such that $u$ is odd and satisfies \eqref{KGhyp}, for some $C>0$ and $\varepsilon$ small.
As in Section \ref{DecayOnCompactIntervals}, let $\varphi \in C^\infty(\R)$ a
 bounded real function and define
  \[
  I(t)=2\intR \varphi(x)u_x(t,x)u_t(t, x) dx
   - \intR\varphi(x) v(t,x)n(t,x) dx+\intR \varphi'(x)u(t,x)u_t(t,x) dx.
  \]
We get the following virial identity:
\begin{lemma}[Virial Identity]\label{VirialIdentityKG}
	Let $(u, u_t, n, v)$ be a solution to \eqref{KGZSv}. Then,
    \begin{equation}
      - \deri I(t) = 2\intR \varphi' u_x^2 dx - \frac12 \intR \varphi'''u^2 dx
      + \frac12 \intR \varphi' n^2 dx + \frac12 \intR\varphi' v^2 dx
      + \intR \varphi'u^2n dx.
    \end{equation}
\end{lemma}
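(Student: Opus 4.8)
The plan is to split $I(t)=I_1+I_2+I_3$ into its three natural pieces,
\[
I_1=2\intR \varphi\, u_x u_t\,dx,\qquad I_2=-\intR \varphi\, vn\,dx,\qquad I_3=\intR \varphi'\, u u_t\,dx,
\]
differentiate each in time, substitute the three equations of \eqref{KGZSv}, and integrate by parts so that every spatial derivative lands on the weight $\varphi$. The identity then falls out after collecting terms, with three pairs cancelling exactly. No parity or smallness hypothesis enters; this is a purely algebraic/integration-by-parts computation.

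First I would handle $\deri I_1=2\intR\varphi(u_{tx}u_t+u_xu_{tt})\,dx$. The term $2\intR\varphi\,u_{tx}u_t\,dx=\intR\varphi\,(u_t^2)_x\,dx=-\intR\varphi'u_t^2\,dx$, while in $2\intR\varphi\,u_xu_{tt}\,dx$ I replace $u_{tt}$ by $u_{xx}-u-nu$ using the Klein--Gordon equation; integrating by parts the three resulting pieces produces $-\intR\varphi'u_x^2\,dx$, $+\intR\varphi'u^2\,dx$ and the cross term $-\intR\varphi\,n(u^2)_x\,dx$. Next, for $\deri I_2=-\intR\varphi(v_tn+vn_t)\,dx$ I use $v_t=-(n+u^2)_x$ and $n_t=-v_x$, turning it into $\intR\varphi(n+u^2)_x n\,dx+\intR\varphi\,vv_x\,dx$; integration by parts yields $-\tfrac12\intR\varphi'n^2\,dx$, $-\tfrac12\intR\varphi'v^2\,dx$ and the cross term $+\intR\varphi\,(u^2)_x n\,dx$. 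Finally $\deri I_3=\intR\varphi'u_t^2\,dx+\intR\varphi'uu_{tt}\,dx$; substituting the Klein--Gordon equation and integrating by parts twice (once to pass from $uu_{xx}$ to $\varphi''uu_x$, once more on that term) gives $\intR\varphi'u_t^2\,dx$, $\tfrac12\intR\varphi'''u^2\,dx$, $-\intR\varphi'u_x^2\,dx$, $-\intR\varphi'u^2\,dx$ and $-\intR\varphi'nu^2\,dx$.

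Summing the three contributions, the two $\varphi'u_t^2$ terms cancel, the two $\varphi'u^2$ terms (from $I_1$ and $I_3$) cancel, and the cross terms $\mp\intR\varphi\,n(u^2)_x\,dx$ (from $I_1$ and $I_2$) cancel; what survives is
\[
\deri I=-2\intR\varphi'u_x^2\,dx+\tfrac12\intR\varphi'''u^2\,dx-\tfrac12\intR\varphi'n^2\,dx-\tfrac12\intR\varphi'v^2\,dx-\intR\varphi'nu^2\,dx,
\]
which is exactly the stated identity after multiplying by $-1$. The main thing to watch — and the reason the functional is not merely the transcription of the Zakharov one — is the $u_t^2$ bookkeeping: differentiating $I_1$ unavoidably creates $-\intR\varphi'u_t^2\,dx$, and the correction term $I_3=\intR\varphi'uu_t\,dx$ is tailored precisely so that its $+\intR\varphi'u_t^2\,dx$ cancels it. I would verify this cancellation and track the signs in the cross terms carefully, since once those two cancellations are confirmed everything else is routine.
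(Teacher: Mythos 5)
Your proof is correct and is essentially the same computation as the paper's: differentiate $I$, substitute the three equations of \eqref{KGZSv}, and integrate by parts, with the $\varphi' u_t^2$, $\varphi' u^2$, and $\varphi\, n(u^2)_x$ pairs cancelling exactly as you describe. The paper performs the computation in one sweep rather than splitting $I$ into $I_1,I_2,I_3$, but all terms, signs, and cancellations agree (the paper's intermediate factor $2$ in front of $\intR\varphi' u_t u_t\,dx$ is a typo, corrected in its next line to coefficient $1$, matching your bookkeeping).
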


\begin{proof}
  Using equation \eqref{KGZSv}, we compute
    \[
    \begin{aligned}
      \deri I(t)=
      & 2\intR \varphi u_{xt}u_t dx +2 \intR \varphi u_{x}u_{xx} dx
        - 2 \intR \varphi u_{x}u dx -2\intR \varphi n u_{x}u dx
        + \intR \varphi \left(n+u^2\right)_x n dx \\
      & + \intR\varphi v v_x dx+2\intR \varphi'u_tu_tdx
        + \intR \varphi'uu_{xx} dx -\intR \varphi'uu dx
        - \intR \varphi'uun dx \\
      =
      & \intR \varphi (u_t^2)_x dx + \intR \varphi (u_x^2)_x dx
        - \intR \varphi (u^2)_x dx - \intR \varphi n (u^2)_x dx
        + \frac12 \intR \varphi (n^2)_x dx \\
      & + \intR \varphi \left(u^2\right)_x n dx
        + \frac12 \intR\varphi (v^2)_x dx + \intR \varphi'u_t^2 dx
        + \intR \varphi'uu_{xx} dx -\intR \varphi'u^2 dx -\intR \varphi'u^2n dx.
    \end{aligned}
    \]
  We integrate by parts
    \[
    \begin{aligned}
      \deri I(t)
      =
      &   - 2\intR \varphi' u_x^2 dx
        - \frac12 \intR \varphi' n^2 dx
        - \frac12 \intR\varphi' v^2 dx
        - \intR \varphi''uu_{x} dx
        - \intR \varphi'u^2n dx\\
      =
      & - 2\intR \varphi' u_x^2 dx + \frac12 \intR \varphi'''u^2 dx
        - \frac12 \intR \varphi' n^2 dx - \frac12 \intR\varphi' v^2 dx
        - \intR \varphi'u^2n dx.
    \end{aligned}
    \]
\end{proof}

Notice that now we have a very similiar virial identity to the one obtained in
Subsection \ref{VirialArgument}. In fact, the RHS is the same.
Then, we are
entitled to use the estimations for the bilinear part of \eqref{virial}. Indeed,
we can write
  \[
  -\deri I(t) =  B(u) + \frac12 \intR \varphi' n^2 dx
  + \frac12 \intR\varphi' v^2 dx + \intR \varphi'u^2n dx,
  \]
where $B$ is defined in \eqref{Bilineal1}. Just as before, for $\lambda>0$,
consider $\varphi(x)=\lambda \tanh(x/\lambda)$ and
$\omega(x)=\sqrt{\varphi'(x)}$. Finally, using the arguments in
Subsection \ref{VirialArgument} and by estimation \eqref{E1}, we have that
  \begin{equation}\label{VirialIdentityWithEstimationsKGZ}
  -\deri I(t) \gtrsim \|u(t)\|_{H^1_\omega(\R)}^2
  + \frac12 \intR \varphi' n^2 dx
  + \frac12 \intR\varphi' v^2 dx + \intR \varphi'u^2n dx,
  \end{equation}
where $\|\cdot \|_{H^1_\omega(\R)}$ is the weighted-norm introduced in
\eqref{WeightedNorms}.

\medskip

\par In order to conclude the argument, we present the following proposition:

\begin{proposition}\label{KeyKGZakharov}
Let $(u,n,v)$ be a solution of \eqref{KGZSv}. Then, there exists $C>0$ such that
  \[
    \intp \|u_t(t)\|_{L^2_\omega(\R)}+\|u(t)\|_{H^1_\omega(\R)}+
    \|n(t)\|_{L^2_\omega(\R)}
    +\|v(t)\|_{L^2_\omega(\R)}dt\le C.
  \]
\end{proposition}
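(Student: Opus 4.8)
The plan is to reproduce the scheme of Proposition~\ref{keyproposition}, first extracting the $L^1_t$-integrability of the weighted (squared) norms of $u$, $n$ and $v$ from \eqref{VirialIdentityWithEstimationsKGZ}, and then recovering the missing $u_t$ contribution by means of an auxiliary functional. To begin, I would dispose of the sign-indefinite term $\intR \varphi' u^2 n\,dx$ exactly as in Step~3 of Subsection~\ref{VirialArgument}: split it by Young's inequality into a multiple of $\intR\varphi' n^2$ plus $\tfrac{\epsilon}{2}\intR\varphi'|u|^4$, write $\varphi'=\sech^2(x/\lambda)$, use the oddness of $u$ to integrate by parts in the variable $\zeta=\sech(x/\lambda)\,u$, and bound the resulting quartic term by a constant times $\varepsilon\,\|u\|_{H^1_\omega(\R)}^2$ via Sobolev's embedding together with the smallness assumption in \eqref{KGhyp}. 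For $\varepsilon$ small this upgrades \eqref{VirialIdentityWithEstimationsKGZ} to
\[
-\deri I(t)\gtrsim \|u(t)\|_{H^1_\omega(\R)}^2+\|n(t)\|_{L^2_\omega(\R)}^2+\|v(t)\|_{L^2_\omega(\R)}^2 .
\]
Integrating over $[0,\tau]$ and using that, for fixed $\lambda$, $|I(t)|\le C$ uniformly in $t$ (a consequence of the boundedness of $\varphi$ and $\varphi'$ together with Lemma~\ref{PropositionFiniteNormsKGZ}), I would let $\tau\to\infty$ to obtain
\[
\intp \|u(t)\|_{H^1_\omega(\R)}^2+\|n(t)\|_{L^2_\omega(\R)}^2+\|v(t)\|_{L^2_\omega(\R)}^2\,dt\le C .
\]

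The new difficulty, and the step I expect to be the main obstacle, is that the momentum-type functional $I$ carries no information about $u_t$: in the proof of Lemma~\ref{VirialIdentityKG} the two occurrences of $\intR\varphi' u_t^2$ cancel, so the right-hand side of the virial simply does not see the time derivative. To recover it I would introduce the auxiliary functional
\[
G(t)=\intR \omega(x)\,u(t,x)\,u_t(t,x)\,dx,\qquad \omega=\sqrt{\varphi'}=\sech(x/\lambda),
\]
and differentiate using the wave equation $u_{tt}=u_{xx}-u-nu$ of \eqref{KGZSv}. Integrating $\intR\omega\,u\,u_{xx}\,dx$ by parts twice produces the identity
\[
\intR \omega\,u_t^2\,dx=\deri G(t)+\intR\omega\,u_x^2\,dx-\tfrac12\intR\omega''\,u^2\,dx+\intR\omega\,u^2\,dx+\intR\omega\,n\,u^2\,dx,
\]
in which every term except $\deri G$ is already controlled: since $|\omega''|\lesssim \lambda^{-2}\omega$ one has $\intR\omega\,u_x^2+\intR\omega\,u^2+\bigl|\intR\omega''\,u^2\bigr|\lesssim \|u\|_{H^1_\omega(\R)}^2$, while $\intR\omega\,n\,u^2$ is absorbed into $\tfrac12\|n\|_{L^2_\omega(\R)}^2+\tfrac{\varepsilon^2}{2}\|u\|_{H^1_\omega(\R)}^2$ after one Young's inequality and the Sobolev bound $\|u\|_{L^\infty(\R)}\lesssim\varepsilon$ from \eqref{KGhyp}.

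To conclude I would integrate this last identity over $[0,\tau]$. The boundary contribution is harmless, because $|G(t)|\le \tfrac12\bigl(\|u(t)\|_{L^2(\R)}^2+\|u_t(t)\|_{L^2(\R)}^2\bigr)\le C$ uniformly in $t$ by Lemma~\ref{PropositionFiniteNormsKGZ}, so $G(\tau)-G(0)$ stays bounded; the time integral of the remaining terms is dominated by $\intp \|u\|_{H^1_\omega(\R)}^2+\|n\|_{L^2_\omega(\R)}^2\,dt\le C$, which was just established. Letting $\tau\to\infty$ then yields $\intp\|u_t(t)\|_{L^2_\omega(\R)}^2\,dt\le C$, and summing with the earlier bound gives the claim (the statement being understood, as in Proposition~\ref{keyproposition}, for the squared weighted norms). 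I would finally remark that the smallness of $\|u\|_{H^1}$ in \eqref{KGhyp} is used only to absorb the quartic and $nu^2$ terms, whereas the uniform control of $\|u_t\|_{L^2}$ enters solely through the boundary term of $G$; this is precisely why, unlike the Zakharov case, the present argument delivers decay of the full energy norm including $u_t$.
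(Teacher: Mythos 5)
Your proposal is correct, and its first half is exactly the paper's intended argument: the paper's entire proof of Proposition \ref{KeyKGZakharov} is the single sentence that, thanks to \eqref{VirialIdentityWithEstimationsKGZ} and \eqref{KGhyp}, ``the proof follows as in Proposition \ref{keyproposition}'', i.e.\ precisely your first step (Young's inequality on $\intR \varphi' n u^2\,dx$, the oddness/integration-by-parts trick with $\zeta=\sech(x/\lambda)u$, Sobolev plus smallness to absorb the quartic term, then integration in time using the uniform bound on $|I(t)|$ from Lemma \ref{PropositionFiniteNormsKGZ}). Where you genuinely depart from the paper is the treatment of $u_t$, and here your proposal is not just different but actually fills a gap: as you correctly observe, the $\intR\varphi' u_t^2\,dx$ contributions cancel in the proof of Lemma \ref{VirialIdentityKG} (this cancellation is the price paid for including the correction $\intR\varphi' u u_t\,dx$ in $I$, whose role is to eliminate the bad-sign mass term), so \eqref{VirialIdentityWithEstimationsKGZ} carries no information whatsoever about $\|u_t\|_{L^2_\omega}$, and the Zakharov argument it points to can only produce
\begin{equation*}
\intp \|u(t)\|_{H^1_\omega(\R)}^2+\|n(t)\|_{L^2_\omega(\R)}^2+\|v(t)\|_{L^2_\omega(\R)}^2\,dt\le C,
\end{equation*}
never the $u_t$ bound that the statement asserts and that the conclusion of Theorem \ref{theorem3} actually uses. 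Your auxiliary functional $G(t)=\intR\omega\,u\,u_t\,dx$ repairs this cleanly: the identity you derive from $u_{tt}=u_{xx}-u-nu$ is correct, the bound $|\omega''|\le\lambda^{-2}\omega$ (since $\omega''=\lambda^{-2}\omega(\tanh^2-\sech^2)(x/\lambda)$) puts all lower-order terms under $\|u\|_{H^1_\omega}^2$, the $nu^2$ term is absorbed via Young, Sobolev and \eqref{KGhyp}, and $|G(t)|$ is uniformly bounded by Lemma \ref{PropositionFiniteNormsKGZ}, so integrating in time and invoking the bound already established gives $\intp\|u_t(t)\|_{L^2_\omega(\R)}^2\,dt\le C$. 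In short: the paper buys brevity at the cost of leaving the $u_t$ component unjustified; your route costs one extra (standard) functional and a page of computation, and in exchange proves the full statement.

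Two minor points. First, your reading of the statement with \emph{squared} weighted norms is the right one; this is what Proposition \ref{keyproposition}, the definition \eqref{WeightedNorms}, and the subsequent use in the conclusion of Theorem \ref{theorem3} all require, so the missing squares in the statement are a typo. Second, your argument uses the oddness of $u$ and the hypotheses \eqref{KGhyp}, which do not appear in the bare statement of Proposition \ref{KeyKGZakharov}; this is consistent with the paper, since they are declared as standing assumptions at the beginning of the subsection, but it is worth keeping them explicit as you did.
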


\begin{proof}
Thanks to \eqref{VirialIdentityWithEstimationsKGZ} and \eqref{KGhyp}, the proof follow as in Proposition \ref{keyproposition}.
\end{proof}

\subsection{Conclusion of the proof}

Let $\phi$ be a $C^\infty(\R)$ function to be defined later. Then, since $(u,n,v)$ is a solution to equation \eqref{KGZSv},
  \[
    \begin{aligned}
    &\deri \frac12 \intR \phi(x)\left(|u|^2+|u_t|^2+|u_x|^2+|n|^2+|v|^2 \right)(t,x)dx \\
    &=\intR \phi(x)u_t(t,x)u_{xx}(t,x)dx-\intR\phi(x)n(t,x)u_t(t,x)u(t,x)dx \\
    &\quad \ ~ + \intR \phi(x) u_x(t,x)u_{xt}(t,x)dx-\intR \phi(x)n(t,x)v_x(t,x)dx-\intR\phi(x) v(t,x)\left(n+|u|^2\right)_x(t,x)dx.
    \end{aligned}
  \]
After integration by parts, one gets
  \begin{equation}\label{VirialToCOnclude}
    \begin{aligned}
    & \deri \frac12 \intR
      \phi(x)\left(|u|^2+|u_t|^2+|u_x|^2+|n|^2+|v|^2 \right)(t,x)dx \\
    =
    & -\intR \phi'(x)u_t(t,x)u_{x}(t,x)dx-
        \intR\phi(x)n(t,x)u_t(t,x)u(t,x)dx
      + \intR\phi'(x) v(t,x)n(t,x)dx\\
    & + 2\intR\phi(x)v(t,x)u(t,x)u_x(t,x)dx.
    \end{aligned}
  \end{equation}
Thus, if we take $\phi(x)=\sech(x)$, we have that
  \[
    \begin{aligned}
    &\deri \frac12 \intR \sech(x)\left(|u|^2+|u_t|^2+|u_x|^2+|n|^2+|v|^2 \right)(t,x)dx\\ &\quad \lesssim  \|u(t)\|^2_{H^1_\omega(\R)}+\|u_t(t)\|^2_{L^2_\omega(\R)} +\|v(t)\|^2_{L^2_\omega(\R)} +\|n(t)\|_{L^2(\R)}^2.
    \end{aligned}
  \]
Proposition \ref{KeyKGZakharov} implies the existence of a sequence $\{t_n\}\subset \R$, $t_n \to \infty$, such that
  \[
    \|u(t_n)\|^2_{H^1_\omega(\R)}+\|u_t(t_n)\|^2_{L^2_\omega(\R)} +\|v(t_n)\|^2_{L^2_\omega(\R)} +\|n(t_n)\|_{L^2(\R)}^2\to 0.
  \]
Then, we integrate over $[t, t_n]$, take $t_n \to \infty$ and obtain
  \[
    \begin{aligned}
     &\|u(t)\|^2_{H^1_\omega(\R)}+\|u_t(t)\|^2_{L^2_\omega(\R)} +\|v(t)\|^2_{L^2_\omega(\R)} +\|n(t)\|_{L^2(\R)}^2\\
     & \quad \lesssim \int_t^\infty  \|u(\tau)\|^2_{H^1_\omega(\R)}+\|u_t(\tau)\|^2_{L^2_\omega(\R)} +\|v(\tau)\|^2_{L^2_\omega(\R)} +\|n(\tau)\|_{L^2(\R)}^2 d\tau.
    \end{aligned}
  \]
Thanks to Proposition \ref{KeyKGZakharov}, the RHS of the last equation is finite. Consequently, we can take $t \to \infty$ and conclude the proof.

\section{Decay in regions along curves for Klein-Gordon-Zakharov}\label{KGFarFieldRegions}

To construct the virial identity, consider $\varphi \in C^2(\R)$ a bounded real function and $\lambda, \mu \in C^1(\R)$ functions depending on time. We define
  \[
    J(t)=\frac12 \intR \varphi\left(\frac{x+\mu(t)}{\lambda(t)}\right)
    \bigg(|u_x|^2
    +|u|^2 + |u_t|^2 + \frac{1}{2} |v|^2
    + \frac12 |n|^2 + n|u|^2 \bigg)(t,x) dx.
  \]

\begin{lemma}\label{SecondVirialKG}
  Let $(u,n,v)$ be a solution to \eqref{KGZSv}. Then,
  \begin{equation*}
    \begin{aligned}
    \deri J(t)
    &
    = \frac{1}{\lambda(t)}\intR \varphi'\left(\frac{x+\mu(t)}{\lambda(t)}\right)
    \bigg(\frac{1}{2} vn+ \frac12 v|u|^2 - u_t u_{x}
      \bigg)(t,x) dx\\
    &
    +\frac{\mu'(t)}{2\lambda(t)}\intR \varphi'\left(\frac{x+\mu(t)}{\lambda(t)}\right)\bigg(|u_x|^2
    +|u|^2 + |u_t|^2 + \frac12 |v|^2 + \frac12 |n|^2 + n|u|^2 \bigg)(t,x) dx\\
    &
    -\frac{\lambda'(t)}{2\lambda(t)}\intR \varphi'\left(\frac{x+\mu(t)}{\lambda(t)}\right)
    \left(\frac{x+\mu(t)}{\lambda(t)}\right)\bigg(|u_x|^2
    +|u|^2 + |u_t|^2 + \frac12 |v|^2 + \frac12 |n|^2 + n|u|^2 \bigg)(t,x) dx.
    \end{aligned}
  \end{equation*}
\end{lemma}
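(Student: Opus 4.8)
The plan is to differentiate $J(t)$ by separating the two sources of time dependence in the integrand: the cutoff $\varphi\!\big(\tfrac{x+\mu(t)}{\lambda(t)}\big)$, which carries the explicit dependence through $\mu$ and $\lambda$, and the fields $u,u_t,n,v$, which evolve according to \eqref{KGZSv}. Writing $y=\tfrac{x+\mu(t)}{\lambda(t)}$ and $\chi(t,x)=\varphi(y)$, the chain rule gives $\partial_t\chi=\varphi'(y)\big(\tfrac{\mu'(t)}{\lambda(t)}-\tfrac{\lambda'(t)}{\lambda(t)}\,y\big)$. Since the density multiplying $\chi$ in $J(t)$ is exactly $|u_x|^2+|u|^2+|u_t|^2+\tfrac12|v|^2+\tfrac12|n|^2+n|u|^2$, differentiating the weight alone reproduces, term by term, the second and third lines of the claimed identity. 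It then remains to handle the contribution of the time derivatives of the fields, with $\chi$ frozen.

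For that contribution I would compute the time derivative of the energy density $e:=|u_x|^2+|u|^2+|u_t|^2+\tfrac12|v|^2+\tfrac12|n|^2+n|u|^2$ (recall $u$ is real here, so $|u|^2=u^2$), and then substitute the evolution equations $u_{tt}=u_{xx}-u-nu$, $n_t=-v_x$ and $v_t=-(n+|u|^2)_x$. The crucial feature is that all zero-order and cubic terms telescope: the contributions $2uu_t$ and $-2uu_t$ cancel, as do $2nuu_t$ and $-2nuu_t$, leaving $\partial_t e=2u_xu_{xt}+2u_tu_{xx}+vv_t+nn_t+n_t|u|^2$. Feeding in $n_t$ and $v_t$ and grouping, this collapses to a pure spatial divergence,
\[
\partial_t e \;=\; \partial_x\!\left(2u_xu_t-vn-v|u|^2\right),
\]
using $2u_xu_{xt}+2u_tu_{xx}=2(u_xu_t)_x$, $-vn_x-nv_x=-(vn)_x$ and $-v(|u|^2)_x-v_x|u|^2=-(v|u|^2)_x$.

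With this divergence structure the field contribution to $\deri J$ is $\tfrac12\intR\chi\,\partial_t e\,dx$, and a single integration by parts, using $\partial_x\chi=\lambda(t)^{-1}\varphi'(y)$, turns it into
\[
-\frac{1}{2\lambda(t)}\intR \varphi'(y)\left(2u_xu_t-vn-v|u|^2\right)dx
=\frac{1}{\lambda(t)}\intR \varphi'(y)\left(\tfrac12 vn+\tfrac12 v|u|^2-u_tu_x\right)dx,
\]
which is precisely the first line of the statement. Collecting the weight and field contributions completes the computation.

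The only genuinely delicate point is justifying the integration by parts, that is, the vanishing of the boundary term $\big[\chi\,(2u_xu_t-vn-v|u|^2)\big]_{x=\pm\infty}$; this follows from the energy-space regularity of $(u,u_t,n,v)$ together with the boundedness of $\chi$ and the effective localization of $\varphi'$, which make the flux integrable and kill the boundary contribution. The algebraic cancellations leading to the divergence form are the heart of the argument, and they mirror exactly those underlying the conservation of $E_{KG}$ in \eqref{KGCQ1}; the localization in $J(t)$ simply converts that conservation law into the transport-type identity above, with the $\mu'$ and $\lambda'$ terms recording the motion and dilation of the cutoff.
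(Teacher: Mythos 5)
Your proof is correct. It cannot be compared line by line with the paper's, because the paper gives no computation for this lemma at all: it simply states that the identity ``follows from \eqref{VirialToCOnclude}''. Your derivation is the natural direct one, and it is in fact cleaner than that cross-reference suggests. The identity \eqref{VirialToCOnclude} concerns the density $|u|^2+|u_t|^2+|u_x|^2+|n|^2+|v|^2$ with a time-independent weight $\phi(x)$; since that density is \emph{not} the conserved energy density (it omits the coupling term $n|u|^2$ and weights $|n|^2$, $|v|^2$ differently), its time derivative is not a pure spatial divergence, and non-divergence terms such as $-\intR \phi\, n u u_t\,dx$ and $2\intR \phi\, v u u_x\,dx$ survive on its right-hand side. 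In the lemma at hand the localized density is exactly the energy density of \eqref{KGCQ1}, and your key observation --- that the equations \eqref{KGZSv} make the cubic and zero-order terms telescope, leaving $\partial_t e=\partial_x\left(2u_xu_t-vn-v|u|^2\right)$ --- is precisely what forces the right-hand side of the lemma to consist only of $\varphi'$-terms after one integration by parts, with the $\mu'$ and $\lambda'$ lines coming from differentiating the moving weight, exactly as you compute. Your treatment of the boundary terms (the flux $2u_xu_t-vn-v|u|^2$ is integrable for $(u,u_t,n,v)$ in the energy space, and the weight is bounded) is the standard justification and suffices. In short, your proposal supplies, correctly and self-containedly, the computation the paper elides, and it makes transparent the structural point --- conservation of energy localized by a moving cutoff --- that the paper's citation of \eqref{VirialToCOnclude} leaves implicit.
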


\medskip

\noindent We skip the proof of Lemma \ref{SecondVirialKG}, since it follows from \eqref{VirialToCOnclude}

\subsection{Proof Theorem \ref{theorem4}}

As we did in Section \ref{DecayinFarRegions}, we consider $\varphi \in C^2(\R)$ a decreasing function satisfying
$\varphi(s)=1$ for $s\le -1$ and $\varphi(s)=0$ for $s\ge 0$. It follows that $\text{supp}(\varphi')\subset [-1, 0)$ and
  \[
    \varphi'(s)\le 0, \quad \varphi'(s)s\ge 0 \quad
    \forall s \in \R.
  \]
Also, for $\delta>0$, we take $\lambda(t)=t\log^{1+\delta}(t)$ and $\mu(t)=\lambda(t)$. Just as before, we are going to take adventage of the fact that $\lambda^{-1}$ is integrable in time over the time interval $[T, \infty)$, for some $T\ge 2$, while $\frac{\lambda'}{\lambda}$ is not.

\medskip

We re-arrange the virial identity \eqref{SecondVirialKG} as:
  \begin{equation}\label{SecondVirialKGRearranged}
    \begin{aligned}
    &
    \frac{\lambda'(t)}{2\lambda(t)}\intR \varphi'\left(\frac{x+\mu(t)}{\lambda(t)}\right)
    \left(\frac{x+\mu(t)}{\lambda(t)}\right)\bigg(|u_x|^2
    +|u|^2 + |u_t|^2 + \frac12 |v|^2 + \frac12 |n|^2 + n|u|^2  \bigg)(t,x) dx\\
    &- \frac{\mu'(t)}{2\lambda(t)}\intR \varphi'\left(\frac{x+\mu(t)}{\lambda(t)}\right)\bigg(|u_x|^2
    +|u|^2 + |u_t|^2 + \frac12 |v|^2 + \frac12 |n|^2 + n|u|^2 \bigg)(t,x) dx\\
    & =-\deri J(t) +\frac{1}{\lambda(t)}\intR \varphi'\left(\frac{x+\mu(t)}{\lambda(t)}\right)
    \bigg(\frac{1}{2} vn+ \frac12 v|u|^2 - u_t u_{x}
    \bigg)(t,x) dx.
    \end{aligned}
  \end{equation}
We have that, by Gagliardo-Nirenberg inequality and Lemma \ref{PropositionFiniteNormsKGZ}
  \[
    \begin{aligned}
    &\intR \varphi'\left(\frac{x+\mu(t)}{\lambda(t)}\right)
    \bigg(\frac{1}{2} vn+ \frac12 v|u|^2 - u_t u_{x}
    \bigg)(t,x) dx \\
    &\lesssim \|v(t)\|^2_{L^2(\R)}+ \|n(t)\|^2_{L^2(\R)}+\|u_x(t)\|^2_{H^1(\R)}+\|u_t(t)\|_{L^2(\R)}^2+\|u(t)\|_{L^2(\R)}^6 \le K,
    \end{aligned}
  \]
where $K>0$, is a constant depending on the energy and the $L^2$-norm of $u$.
Thus, we integrate equation \eqref{SecondVirialKGRearranged} in time over $[2, \infty)$ and get that
  \begin{equation*}
    \begin{aligned}
    &\int_2^{\infty}
    +\frac{\lambda'(t)}{2\lambda(t)}\intR \varphi'\left(\frac{x+\mu(t)}{\lambda(t)}\right)
    \left(\frac{x+\mu(t)}{\lambda(t)}\right)\bigg(|u_x|^2
    +|u|^2 + |u_t|^2 + \frac12 |v|^2 + \frac12 |n|^2 + n|u|^2     \bigg)(t,x) dx\\
    &- \frac{\mu'(t)}{2\lambda(t)}\intR \varphi'\left(\frac{x+\mu(t)}{\lambda(t)}\right)\bigg(|u_x|^2
    +|u|^2 + |u_t|^2 + \frac12 |v|^2 + \frac12 |n|^2 + n|u|^2 \bigg)(t,x) dx dt <\infty.
    \end{aligned}
  \end{equation*}
Note that by Sobolev embbedings and \eqref{hip1theorem4},
  \begin{equation}\label{final}
    \begin{aligned}
      -\intR \phi\left(\frac{x+\mu(t)}{\lambda(t)}\right)
      \left(|n||u|^2\right)(t,x)dx & \ge
      -\intR \phi\left(\frac{x+\mu(t)}{\lambda(t)}\right)
      \left(\frac{1}{3}|n(t,x)|^2+\frac34|u(t,x)|^4\right)dx\\
      &\ge-\intR \phi\left(\frac{x+\mu(t)}{\lambda(t)}\right)
      \left(\frac{1}{3}|n(t,x)|^2+\frac34|u(t,x)|^2\right)dx.
    \end{aligned}
  \end{equation}
Then, we argue as in Subsection \ref{SecondPartOfTheProof} and obtain that there exists a sequence of time $\{t_n\}$, $t_n \to \infty$, such that,
  \begin{equation}\label{TermsTendingToZeroEnergyKG}
    \begin{aligned}
    \intR \left|\varphi'\left(\frac{x+\mu(t_n)}{\lambda(t_n)}\right)\right|
    \bigg(|u_x|^2
    +|u|^2 + |u_t|^2 + \frac12 |v|^2 + \frac12 |n|^2 +n|u|^2\bigg)(t_n,x) dx
     \to 0.
    \end{aligned}
  \end{equation}
  As in Section \ref{DecayinFarRegions}, we consider $\phi \in C_0^1(\R)$
  such that $\phi(s) \in [0,1] \text{ for all } s\in \R$,
  $\text{supp}(\phi)=[-3/4, -1,4]$,
  \[
  \phi(s)\lesssim|\varphi'(s)|\text{ and } |\phi'(s)|\lesssim|\varphi'(s)|
  \text{ for all }s \in \R.
  \]
  Following the computations for $\varphi$, one gets,
  	\begin{equation*}
  	\begin{aligned}
  	& \left|\dfrac{d}{dt}\intR
    \phi\left(\frac{x+\mu(t)}{\lambda(t)}\right)\bigg(|u_x|^2+|u|^2+|u_t|^2
    +\frac12 |n|^2+\frac12|v|^2+n|u|^2\bigg)(t,x)dx \right|\\
  	&\quad \lesssim  \frac{1}{\lambda(t)}
    \intR\left| \phi'\left(\frac{x+\mu(t)}{\lambda(t)}\right)\right|
    \left|\bigg( \frac{1}{2} vn+ \frac12 v|u|^2 -
    u_t u_{x}\bigg)(t,x)\right| dx\\
  	&\quad {}~ +\frac{\mu'(t)}{2\lambda(t)}\intR
    \left|\phi'\left(\frac{x+\mu(t)}{\lambda(t)}\right)\right|
    \left|\Big(|u_x|^2+|u|^2+|u_t|^2+\frac12|v|^2+\frac12|n|^2
    +n|u|^2\Big)(t,x)\right| dx.
  	\end{aligned}
  	\end{equation*}

  Integrate over $[t, t_n]$, $t \ge T$
  %
  and take $t_n\to \infty$. Thanks to \eqref{TermsTendingToZeroEnergyKG}, we obtain
    \begin{equation*}
      \begin{aligned}
        & \left|\intR \phi\left(\frac{x+\mu(t)}{\lambda(t)}\right)\bigg(|u_x|^2+ |u|^2+|u_t|^2+\frac12 |n|^2+\frac12|v|^2+n|u|^2\bigg)(t,x)dx \right|\\
        &\lesssim \int_t^{\infty} \frac{1}{\lambda(\tau)} d\tau +\int_t^{\infty} \frac{\mu'(\tau)}{2\lambda(\tau)}\intR \left|\phi'\left(\frac{x+\mu(\tau)}{\lambda(\tau)}\right)\right|\left|\Big(|u_x|^2 + |u|^2+ |u_t|^2+|n|^2+|v|^2+n|u|^2\Big)(\tau,x)\right| dx d\tau
      \end{aligned}
    \end{equation*}
  Now, taking $t\to \infty$, we obtain
    \begin{equation*}
      \begin{aligned}
        \lim_{t\to \infty} \left|\intR \phi\left(\frac{x+\mu(t)}{\lambda(t)}\right)
        \bigg(|u_x|^2+ |u|^2+|u_t|^2
        +\frac12 |n|^2+\frac12|v|^2+n|u|^2\bigg)(t,x)dx \right|\le 0
      \end{aligned}
    \end{equation*}
Consequently, taking into account \eqref{final}, one gets
  \begin{equation*}
    \begin{aligned}
      &\lim_{t\to \infty} \intR \phi\left(\frac{x+\mu(t)}{\lambda(t)}\right)\bigg(|u_x|^2+\frac14 |u|^2+|u_t|^2+\frac16 |n|^2+\frac12|v|^2\bigg)(t,x)dx\\
      &\le\lim_{t\to \infty} \intR \phi\left(\frac{x+\mu(t)}{\lambda(t)}\right)\bigg(|u_x|^2+ |u|^2+|u_t|^2
      +\frac12 |n|^2+\frac12|v|^2+n|u|^2\bigg)(t,x)dx
      =0.
    \end{aligned}
  \end{equation*}
Then, \eqref{tesis1theorem4} follows.

\end{document}